\newcommand{\QQ}{\mathbb{Q}}
\newcommand{\ZZ}{\mathbb{Z}}
\newcommand{\RR}{\mathbb{R}}
\newcommand{\CC}{\mathbb{C}}
\renewcommand{\phi}{\varphi}
\renewcommand{\epsilon}{\varepsilon}
\newcommand{\hft}{\max\{h_{\mathsf{M}_d}(f_t), \log|\operatorname{Norm}\mathfrak{R}_{f_t}|\}}
\newtheorem{theorem}{Theorem}
\newtheorem{lemma}[theorem]{Lemma}
\theoremstyle{definition}
\newtheorem*{remark}{Remark}
\title[Canonical heights for weighted homogeneous families]{Canonical heights and preperiodic points for certain weighted homogeneous families of polynomials}
\author{Patrick Ingram}
\date{\today}
\address{Current address: York University, 4700 Keele St., Toronto, Canada}
\email{pingram@yorku.ca}
\thanks{The author's research is partially supported by Simons Collaboration Grant \#283120.}
\subjclass[2010]{37P30 (primary), 37P35 (secondary)}
\begin{document}
\maketitle
\begin{abstract}
A family $f$ of polynomials over a number field $K$ will be called \emph{weighted homogeneous} if and only if $f_t(z)=F(z^e, t)$ for some binary homogeneous form $F(X, Y)$ and some integer $e\geq 2$. For example, the family $z^d+t$ is weighted homogeneous. We prove a lower bound on the canonical height, of the form
\[\hat{h}_{f_t}(z)\geq \epsilon \hft,\]
for values $z\in K$ which are not preperiodic for $f_t$. Here $\epsilon$ depends only on the number field $K$, the family $f$, and the number of places at which $f_t$ has bad reduction. For suitably generic morphisms $\varphi:\mathbb{P}^1\to \mathbb{P}^1$, we also prove an absolute bound of this form for $t$ in the image of $\varphi$ over $K$ (assuming the $abc$ Conjecture), as well as uniform bounds on the number of preperiodic points (unconditionally).
\end{abstract}


\section{Introduction}

Let $c\in\mathbb{Q}$, and let $f_c(z)=z^2+c$. Poonen conjectured~\cite{poonen} that $f_c$ cannot have a $\QQ$-rational periodic point of exact period $N$, unless $N\leq 3$.  More generally, Morton and Silverman~\cite{ms} have speculated that if $f$ is a rational function of degree $d\geq 2$ defined over a number field $K$, then there should be a bound on the number of $K$-rational preperiodic points of $f$ which depends just on $d$ and $K$. It is reasonable to ask whether there exists even a single one-parameter family of rational functions for which one might prove the uniform boundedness of $K$-rational preperiodic points. So far the best known bounds depend on the number of places at which $f$ has bad reduction~\cite{benedetto, call-goldstine, canci}.

Certainly there are three ready sources of examples. In the first place, the result is fairly easy to prove for isotrivial families. For families of Latt\`{e}s maps the result is highly non-trivial, but reduces to the uniform boundedness result of Merel~\cite{merel}. Finally, if the family is over a base curve $X$ of general type, Faltings' Theorem gives that there are only finitely many $K$-rational points on the base to start with, and so uniform bounds on preperiodic points are unimpressive. Thus the question becomes ``Can one establish uniform boundedness of preperiodic points for even a single non-isotrivial, non-Latt\`{e}s family of rational functions over a rational (or elliptic) base curve?'' The purpose of this note is to present some special cases in which one can.

Now, with $f$ a rational function over $K$, let $\hat{h}_f$ be the associated canonical height, $h_{\mathsf{M}_d}(f)$ the height of the representative point in moduli space (relative to any fixed ample line bundle), and $\mathfrak{R}_f$ the minimal resultant. It is conjectured by Silverman~\cite[Conjecture~4.98, p.~221]{ads} that over a fixed number field $K$, and for fixed $d\geq 2$, there exists an $\epsilon>0$ such that
\[\hat{h}_f(z)\geq \epsilon \max\{h_{\mathsf{M}_d}(f), \log|\operatorname{Norm}(\mathfrak{R}_t)|\},\]
for any $f(z)\in K(z)$ of degree $d$, and any $z\in \mathbb{P}^1(K)$ which is not preperiodic for $f$, a conjecture motivated by a similar conjecture of Lang for elliptic curves (see~\cite[p.~92]{lang} and \cite{hindry-silverman, jsduke}). In the case of a one-parameter family $f$ depending on a parameter $t$, the quantity $h(t)$ is a reasonable stand-in for the right-hand-side.
This conjecture is superficially unrelated to uniform boundedness, but both in some sense reflect a paucity of points of small height. (Compare, for example, the proofs of \cite[Theorem~6.9]{call-goldstine} and \cite[Theorem~1]{ads_lang}.) We will explore these conjectures as well for certain special families of polynomials.

A family $f$ of polynomials (whose specializations will be denoted $f_t$) will be called \emph{homogeneous of weight $e$} if $e\geq 2$ and $f_t(z)=F(z^e, t)$ for some homogeneous binary form $F(X, Y)$ divisible by neither $X$ nor $Y$. A family is simply \emph{weighted homogeneous} if it is homogeneous of weight $e$ for some $e\geq 2$. The prime example of such a family is the unicritical family $z^d+t$, but for instance $z^{6}-3tz^3+t^2$ is another weighted homogeneous family. Although allowing $e=1$ in the definition would make sense, our results all rely on the hypothesis $e\geq 2$. Every such family is isomorphic, over a certain cover of the base, to a family of the form $g_s(z)=sg_1(z)$.

Our first theorem addresses the conjecture of Silverman, and generalizes Theorem~1 of \cite{ads_lang}.

\begin{theorem}\label{th:badprimes}
Let $K$ be a number field, and let $f$ be a weighed homogeneous family of polynomials defined over $K$. Then for any $s\geq 0$ there exists an $\epsilon>0$ (depending on $K$, the family $f$, and the fixed ample class on $\mathsf{M}_d$) such that the following holds: for any $t\in K$ for which $f_t$ has at most $s$ places of bad reduction, and any $z\in K$ which is not preperiodic for $f_t$,
\[\hat{h}_{f_t}(z)\geq \epsilon\hft.\]
\end{theorem}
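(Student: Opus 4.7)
The plan is to decompose the canonical height as a sum of local canonical heights $\hat{\lambda}_{f_t,v}$ over all places $v$ of $K$, obtain a sharp lower bound on each $\hat{\lambda}_{f_t,v}$ at places of good reduction, and use a cruder uniform control at the (at-most-$s$) bad places. A preliminary reduction, flagged in the introduction, is to work instead with the linearly-parameterized family $g_u(w) = u\,g_1(w)$, where $g_1(w) = F(w^e, 1)$: over the degree-$e$ cover $t = r^e$, the family $f_t$ becomes conjugate to $g_u$ with $u$ a specified monomial in $r$, and canonical heights, preperiodicity, bad places, and minimal resultants all transform in a controlled way.

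For $g_u$ the leading coefficient is $u$ times the (fixed) leading coefficient of $g_1$, so good reduction at a finite place $v$ amounts (up to finitely many places intrinsic to the family) to $|u|_v = 1$, with escape radius $\approx |u|_v^{-1/(d-1)}$. The local estimate to establish is
\[
\hat{\lambda}_{g_u, v}(w) \;\geq\; c\,\log^{+}\!\bigl(|w|_v \cdot |u|_v^{1/(d-1)}\bigr) \;-\; C_v,
\]
with $C_v = 0$ at good places. At such places an escape-rate argument applies whenever $|w|_v > |u|_v^{-1/(d-1)}$: the iterates $g_u^n(w)$ grow at the expected rate $|u|_v^{(d^n-1)/(d-1)}|w|_v^{d^n}$, yielding the claim. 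The hypothesis $e \geq 2$ enters by guaranteeing that the leading $u\,w^d$ term of $g_u$ dominates the lower-order $u$-multiples in the escape regime, a point which would fail for $e = 1$ since all coefficients then scale like $|u|_v$.

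Summing the local estimate and applying the product formula to $|u|_v^{1/(d-1)}$ gives $\hat{h}_{g_u}(w) \geq \epsilon_0\, h(u) - \sum_{v \text{ bad}} C_v$. At each bad place the standard difference bound $\bigl|\hat{\lambda}_{g_u,v}(w) - \log^{+}|w|_v\bigr| = O\bigl(\log|\mathfrak{R}_{g_u}|_v\bigr)$ controls $C_v$ in terms of the local contribution to the minimal resultant, and summing over the at-most-$s$ bad places yields a total bounded by $O\bigl(s \cdot \log|\operatorname{Norm}\mathfrak{R}_{g_u}|\bigr)$. Choosing $\epsilon$ small enough in terms of $s$ absorbs this into the main term. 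Finally, for a weighted homogeneous family both $h_{\mathsf{M}_d}(f_t)$ and $\log|\operatorname{Norm}\mathfrak{R}_{f_t}|$ are commensurate with $h(t)$ (hence with $h(u)$), so the inequality $\hat{h}_{g_u}(w) \geq \epsilon\, h(u)$ translates to the claim of the theorem.

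The chief obstacle is the local estimate in the transition regime $|w|_v \approx |u|_v^{-1/(d-1)}$, where neither the escape-rate nor the trapped-orbit argument applies directly. Here the weighted homogeneous form pays off: because $g_u$ depends on $w$ only through $w^e$, the transition analysis reduces to a finite case-check on the single auxiliary polynomial $F(\cdot, 1)$, giving constants depending only on the family $f$ rather than on the particular $u$ or $v$. A secondary, book-keeping difficulty is handling the places ramifying in the degree-$e$ cover and the finitely many places intrinsic to the family at which $g_u$ always has bad reduction; these are absorbed into the final constant $\epsilon$.
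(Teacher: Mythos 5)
Your approach has a fundamental gap that cannot be repaired within the framework you set up. You propose to lower-bound the canonical height by summing local lower bounds of the form
\[
\hat{\lambda}_{g_u, v}(w) \;\geq\; c\,\log^{+}\!\bigl(|w|_v \cdot |u|_v^{1/(d-1)}\bigr) \;-\; C_v,
\]
with $C_v = 0$ at good places, and then invoking the product formula. But this cannot produce a bound of the shape $\epsilon\, h(u) - C$. The quantity $\sum_v \log^{+}\bigl(|w|_v |u|_v^{1/(d-1)}\bigr)$ is (up to normalizations) the height of the ``point'' $w u^{1/(d-1)}$, which has nothing to do with $h(u)$ and in particular is close to zero exactly when $w$ lies in the $v$-adic filled Julia set at every place --- which is precisely the regime where the theorem has content. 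For a non-preperiodic $w$ of very small canonical height, every local Green's function is small: at good places $|w|_v$ stays in the closed unit disk and $G_{g_u,v}(w)=0$; at bad places $|w|_v$ is trapped near the escape radius $|u|_v^{-1/(d-1)}$ and $\log^{+}\bigl(|w|_v|u|_v^{1/(d-1)}\bigr)\approx 0$. Your right-hand side then collapses to $-\sum_{v\text{ bad}} C_v$, giving nothing. Said differently: your estimate is a pure local-to-global sum applied to a single point, so it is blind to the hypothesis that $w$ is not preperiodic --- and since a preperiodic point has canonical height exactly zero, no such single-point estimate can possibly yield $\hat{h}_{g_u}(w)\geq \epsilon\, h(u)$.

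The essential idea you are missing is that non-preperiodicity must be used \emph{via the orbit}, not via a local bound on one point. The paper runs a pigeon-hole argument on the points $f_t^0(x),\dots,f_t^N(x)$ for $N$ of size roughly $(d+2)^{\#S}$: either two iterates coincide (so $x$ is preperiodic of bounded portrait), or at every bad place a positive proportion of the iterates land in a common disk of radius $\approx |t|_v^{-(d-1)/(e\alpha_i)}$, and by Lemma~\ref{lem:pigeon} two distinct iterates $f_t^i(x), f_t^j(x)$ survive the pruning across all bad places. For such a pair, the dynamical Arakelov--Green's function satisfies $g_{f_t,v}(f_t^i(x),f_t^j(x))\geq \delta\log^{+}|t|_v - \mathfrak{c}_v$ at every $v$, and summing over all places --- where the $-\log|f_t^i(x)-f_t^j(x)|_v$ term vanishes by the product formula --- yields $\hat{h}_{f_t}(f_t^i(x))+\hat{h}_{f_t}(f_t^j(x))\geq \delta h(t)-C'$. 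The key input, encoded in $g_{f_t,v}$, is the Diophantine principle that two distinct global points cannot be simultaneously very close at every place; that is what converts the smallness of the bad-reduction disks into a lower bound proportional to $h(t)$. Your proposal contains nothing playing this role, and the number of bad places $s$ should enter through the exponent $(d+2)^{\#S}$ (the length of orbit you need to pigeon-hole through), not through a budget $O(s\cdot\log|\operatorname{Norm}\mathfrak{R}|)$ of error constants.

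Your preliminary reduction (monicization / linear reparameterization $g_u = u\,g_1$) and the comparison $\hft \ll h(t)$ are both fine and are present in the paper (Lemma~\ref{lem:useht} and the opening of the proof); the failure is entirely in the core local-to-global step.
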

The proof of Theorem~\ref{th:badprimes}  rests on the pigeon-hole principle. The type of family defined above has the property that at every place $v$ of bad reduction, there are a few disks, very small when the parameter $t$ is large, such that any point outside of these escapes to infinity in a uniform way. In any sufficiently long segment of an orbit, one of three things happes: we must have either a repetition (and so the orbit is preperiodic of bounded size), the points leave the disks at some place and escape to infinity in a way that bounds their canonical height from below, or two distinct points which are in the same disk at every bad place. These disks are very small, in the last case one of these points must have very large height by the standard Diophantine observation that points of small height cannot be too close to one another. This again gives us a lower bound on the heights in the orbit. 

The argument is motivated by that presented in~\cite{jsduke}, with the aforementioned disks playing the role of the components of the N\'{e}ron model in the elliptic curve case. In the dynamical setting, similar arguments have been applied by Benedetto~\cite{benedetto} and Baker~\cite{baker1, baker2} to obtain upper bounds on preperiodic points and lower bounds on the canonical height, and by the author~\cite{drinf} to obtain a result analogous to~\cite{jsduke} in the context of Drinfeld modules. Here we derive stronger results here by greatly restricting the types of rational function under consideration.

Indeed, just as in~\cite{ads_lang}, we can have $\epsilon$ depending only on the number of places at which a certain type of bad reduction occurs. Although we are unable to prove that $\epsilon$ may be made independent of the number of primes of bad reduction, we can establish this for suitably generic base extensions of the family, under additional hypotheses.

Setting
\[N_e=\begin{cases}
5 & \text{if }e=2\\
4 & \text{if }e=3\\
3 & \text{otherwise,}
\end{cases}
\] we say that the cover $\varphi:\mathbb{P}^1\to\mathbb{P}^1$ is \emph{$e$-general} if and only if $\varphi$ has at least $N_e$ distinct poles in $\mathbb{A}^1(\overline{K})$ of order prime to $e$. 

\begin{theorem}\label{th:abc}
Let $K$ be a number field for which the $abc$ Conjecture holds, let $f$ be a family  of polynomials defined over $K$, homogeneous of weight $e$, and let $\varphi:\mathbb{P}^1_K\to\mathbb{P}^1_K$ be an $e$-general covering. Then there exists an $\epsilon>0$ such that for any $t=\varphi(s)$ with $s\in \mathbb{P}^1(K)$,
\[\hat{h}_{f_{t}}(z)\geq \epsilon\hft\]
for all $z\in K$ which are not preperiodic for $f_{t}$.
\end{theorem}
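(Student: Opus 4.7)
The plan is to deduce Theorem~\ref{th:abc} from Theorem~\ref{th:badprimes} by using the $abc$ conjecture to replace the implicit dependence of $\epsilon$ on the count of bad places with an absolute constant.

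My first step is to describe the places of bad reduction of $f_{\varphi(s)}$ in terms of $s$. Since $f$ is weighted homogeneous, $f_t(z)=F(z^e,t)$, a direct calculation with the resultant shows that $\log|\operatorname{Norm}\mathfrak{R}_{f_t}|$ and $h_{\mathsf{M}_d}(f_t)$ are both comparable to $h(t)$, and that at a place $v$ the obstruction to good reduction is essentially an integer multiple of $\max(-v(t),0)$. For $t=\varphi(s)$ the relevant places are thus (outside a fixed finite set depending only on $f$ and $\varphi$) the places where $s$ is $v$-adically close to a pole $p_i$ of $\varphi$. Moreover, if a pole $p_i$ has order $d_i$ divisible by $e$, then $t=\varphi(s)$ is an exact $e$-th power in a $v$-adic neighbourhood of $p_i$, and a change of coordinates of the shape $z\mapsto z\cdot u_v^{d_i/e}$ with $u_v$ a uniformizer absorbs this ramification into the $z^e$ of $F(z^e,t)$, producing a model of good reduction at $v$. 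Hence only poles of $\varphi$ of order prime to $e$ contribute genuine bad reduction places.

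The second step applies the $abc$ conjecture to the differences $s-p_i$ for the $N_e$ designated poles. Since $N_e\ge 3$, any three of the $s-p_i$ are $K$-linearly dependent, producing abc-type triples; combined with the factor $1-1/e$ of local saving from the $e$-th power structure above, running abc through this system should give a global inequality of the form
\[\sum_{v\text{ bad}} n_v\max(-v(\varphi(s)),0)\;\le\;(1-\delta)\,\hft+O(1),\]
for some $\delta=\delta(\varphi,e)>0$. The thresholds $N_e=5,4,3$ for $e=2,3,\ge 4$ are precisely the minimal values at which the combinatorics $\sum_i (1-1/d_i) > 1+1/e$ (in the spirit of Mason--Stothers for generalized Fermat equations) becomes attainable with $d_i$ prime to $e$, so $e$-generality is exactly the hypothesis that makes the abc bookkeeping effective.

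Finally, I would rerun the pigeonhole argument of Theorem~\ref{th:badprimes}, whose sketch in the introduction shows that $\hat{h}_{f_t}(z)$ is bounded below in terms of the ``good part'' of the height, namely $\hft$ minus a contribution supported on bad places. The abc bound above controls exactly this bad contribution, guaranteeing a strictly positive remainder $\delta\cdot\hft-O(1)$; the pigeonhole then yields $\hat{h}_{f_t}(z)\ge \epsilon\,\hft$ for an absolute $\epsilon$ depending only on $K$, $f$, and $\varphi$.

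The hard part is the second step: the $abc$ conjecture delivers only bounds of the form $(1+\eta)h_{\mathrm{rad}}+O_\eta(1)$, while the $e$-th power saving is just $1/e$ per bad place. Producing a strict multiplicative saving $\delta>0$ therefore requires the pole structure of $\varphi$ to be rich enough that, after paying the $(1+\eta)$ slack and the $1/e$ loss, a positive proportion of $h(\varphi(s))$ survives. This is exactly the role of the thresholds $N_e$, and checking it rigorously (with the correct dependence on the orders $d_i$) is the step where the proof is most delicate.
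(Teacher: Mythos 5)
Your high-level framing is in the right direction --- apply $abc$ to the $N_e$ designated poles of $\varphi$ to show that a definite fraction of $h(\varphi(s))$ lives at places $v$ with $e\nmid v(\varphi(s))$ --- but the final step of your proposal misses the key mechanism and would not go through as written.

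The crucial input the proposal omits is Lemma~\ref{lem:greens}\eqref{it:typeicontributes}: outside a fixed finite set $S$ depending only on $K$ and $f$, if $e\nmid v(t)$ then $G_{f_t,v}(z)\geq\frac{1}{d}\log^+|t|_v$ for \emph{every} $z\in K$, because $v(z)$ is an integer and so $|z|_v^e$ can never equal $|t|_v$; consequently $z$ escapes to infinity locally regardless of whether it is close to the filled Julia set. Once the $abc$ conjecture produces $\delta>0$ with
\[\delta[K:\QQ]\,h(\varphi(s))\leq \sum_{\substack{v\notin S\\ e\nmid v(\varphi(s))}}[K_v:\QQ_v]\,\lambda_{[\infty],v}(\varphi(s))+O(1),\]
one sums the pointwise Green's function lower bound over these places, uses nonnegativity of $G_{f_t,v}$ everywhere else, and concludes $\hat h_{f_{\varphi(s)}}(z)\geq\frac{\delta}{d}h(\varphi(s))-O(1)$ directly, with \emph{no} pigeonhole at all. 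Your proposal instead tries to deduce the theorem from Theorem~\ref{th:badprimes} by ``rerunning the pigeonhole,'' but that argument yields a constant $\epsilon$ that degrades like $d^{-2(d+2)^{\#S}}$ in the number of places involved; $abc$ does not bound the number of places $v$ with $v(\varphi(s))<0$, so the dependence does not cancel, and the route through Theorem~\ref{th:badprimes} does not close. Relatedly, your claim that poles of $\varphi$ of order divisible by $e$ can be removed by a coordinate change to produce a model of good reduction is not established and is not what the paper uses; those places remain places of bad reduction, they just fail to contribute a uniform lower bound to the Green's function of arbitrary points, which is a weaker and more relevant statement.

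Your accounting for the thresholds $N_e$ via a Mason--Stothers-type inequality $\sum(1-1/d_i)>1+1/e$ is also not what controls the constant: the paper's condition comes from splitting the Elkies-form $abc$ bound
\[(N_e-2-\epsilon)[K:\QQ]h(t)\leq L_1(t)+L_2(t)+O(1)\]
into the parts $L_1$ (valuations not divisible by $e$) and $L_2$ (valuations divisible by $e$), observing that $\frac{1}{e}(L_1+eL_2)\leq\frac{N_e}{e}[K:\QQ]h(t)+O(1)$, and requiring $N_e-2-N_e/e>0$, i.e.\ $N_e>2e/(e-1)$; this is the exact origin of the values $5,4,3$. That the two combinatorial conditions happen to agree at these values is a coincidence of the thresholds, not the mechanism of the proof.
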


Theorem~\ref{th:abc} is reminiscent of a result of Hindry and Silverman~\cite{hindry-silverman}, which proves Lang's Conjecture for elliptic curves~\cite[p.~92]{lang} under the hypothesis of the $abc$ Conjecture, and indeed the proof is in part motivated by that result.  In~\cite{hindry-silverman} the $abc$ Conjecture is used to show, via Szpiro's Conjecture, that a significant amount of the height of the $j$-invariant comes from places at which the N\'{e}ron model has relatively few components, attenuating the worst case in the pigeonhole argument in Silverman's earlier argument~\cite{jsduke}. Here, too, we use the $abc$ Conjecture to show that a significant contribution to the height of $\varphi(s)$, under the hypotheses of the theorem, comes from places of bad reduction where the local dynamics is nonetheless still very simple.

The inequality in Theorem~\ref{th:abc} follows from  an inequality of the form
\[\hat{h}_{f_t}(z)\geq \epsilon h(t)-C,\]
for all $z\in K$,
where $\epsilon$ and $C$ depend on the hypotheses of the Theorem. In particular, it follows from the proof of Theorem~\ref{th:abc} that the Uniform Boundedness Conjecture of Morton and Silverman holds for sufficiently general base extensions of weighted homogeneous families if one assumes the $abc$ Conjecture, simply because $\epsilon h(t)-C$ will be positive for all but finitely many $t\in K$. It turns out that one can prove this unconditionally.

\begin{theorem}\label{th:dg}
Let $K$ be a number field, let $f$ be a family of polynomials defined over $K$, homogeneous of weight $e$,  and let $\varphi:\mathbb{P}^1_K\to\mathbb{P}^1_K$ be an $e$-general covering. Then there are only finitely many $t\in \mathbb{P}^1$ such that $f_{\varphi(t)}$ has a preperiodic point in $K$. In particular, the Uniform Boundedness Conjecture is true for the base extension of the family $f$ by $\varphi$.
\end{theorem}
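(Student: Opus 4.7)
The plan is to pull back the dynatomic curves along $\varphi$ and apply Faltings' theorem to the resulting covers, using the $e$-general hypothesis to force geometric genus at least $2$.

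For each preperiodic type $(n,k)$ with $n\geq 1$, $k\geq 0$, let $\tilde Y_{n,k}\subset\mathbb{A}^1_t\times\mathbb{A}^1_z$ denote the pullback under $\varphi$ of the exact-type dynatomic curve cut out by $f_\tau^{n+k}(z)=f_\tau^k(z)$; any $K$-rational preperiodic pair $(t_0,z_0)$ for $f_{\varphi(t_0)}$ produces a $K$-rational point on some $\tilde Y_{n,k}$. The central geometric step is to show that every irreducible component of $\tilde Y_{n,k}$ has geometric genus at least $2$. I would obtain this via a local analysis at the distinguished poles of $\varphi$: at a pole $t_i\in\mathbb{A}^1$ of order $m_i$ coprime to $e$, the escape radius of $f_{\varphi(t)}$ scales like $|\varphi(t)|^{1/e}\sim|t-t_i|^{-m_i/e}$, and weighted homogeneity forces every preperiodic branch $z(t)$ to grow at exactly that rate. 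Since $\gcd(m_i,e)=1$, the local monodromy around $t_i$ cycles the $e$ choices of $e$-th root, producing a totally ramified point of ramification index $e$ above $t_i$ in the cover $\tilde Y_{n,k}\to\mathbb{P}^1_t$. Combining these $N_e$ totally ramified contributions with additional branching from the critical locus of the dynatomic map, Riemann-Hurwitz then yields $g(\tilde Y_{n,k})\geq 2$; the numerical choice $N_e=5,4,3$ for $e=2,3,\geq 4$ is calibrated precisely to make this inequality valid for every $(n,k)$.

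By Faltings' theorem each $\tilde Y_{n,k}(K)$ is then finite. To conclude that only finitely many $t_0\in K$ lie below any $K$-rational preperiodic point, I must still reduce to finitely many types $(n,k)$. For this I would invoke Theorem~\ref{th:badprimes}: if $z_0$ is preperiodic for $f_{\varphi(t_0)}$ then $\hat h_{f_{\varphi(t_0)}}(z_0)=0$, so the lower bound in that theorem forces $\max\{h_{\mathsf{M}_d}(f_{\varphi(t_0)}),\log|\operatorname{Norm}\mathfrak{R}_{f_{\varphi(t_0)}}|\}$ to be bounded in terms of the number of bad reduction places of $f_{\varphi(t_0)}$. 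Controlling this bad-place count in terms of the (fixed) pole structure of $\varphi$ should then restrict $t_0$, and hence the admissible types, to a finite set.

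The main obstacle I expect is exactly this last uniformity: a priori the bad reduction count of $f_{\varphi(t_0)}$ grows with the prime factorization of the denominator of $\varphi(t_0)$, while Theorem~\ref{th:badprimes} requires a bounded count. The $e$-general hypothesis is evidently tailored to the Riemann-Hurwitz computation, and the real work is to extract from it a global bound on the ``essential'' bad places that makes the combination with Theorem~\ref{th:badprimes} produce uniform finiteness.
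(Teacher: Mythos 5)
Your approach and the paper's differ in an essential way, and the gap you flag at the end is in fact fatal to your plan as stated. The paper does not pull back dynatomic curves at all. Instead it observes (Lemma~\ref{lem:greens}\eqref{it:filled}) a purely local arithmetic constraint: away from a fixed finite set $S$ of places, if $f_{\varphi(t)}$ has \emph{any} $K$-rational preperiodic point $z$, then at every $v\not\in S$ with $|\varphi(t)|_v>1$ one must have $|z|_v^e=|\varphi(t)|_v$, which forces $e\mid v(\varphi(t))$. This decouples the dynamics from the Diophantine problem entirely: the question becomes ``for how many $t$ is the $S'$-denominator of $\varphi(t)$ a perfect $e$-th power?'' That is a superelliptic problem in the poles of $\varphi$ alone, and Lemma~\ref{lem:dglemma} resolves it via Darmon--Granville (which, as the paper notes, is itself a consequence of Faltings). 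The numbers $N_e=5,4,3$ are calibrated precisely to push the signature tuple $\{e/\gcd(e,e_i)\}$ out of the spherical/Euclidean list $\{2,2,2,2\}$, $\{3,3,3\}$, $\{2,4,4\}$, $\{2,2,n\}$, $\{2,3,n\}$ in Darmon--Granville; they are not derived from any genus computation on dynatomic curves.

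Your plan has two concrete problems. First, even granting that every pulled-back dynatomic curve $\tilde Y_{n,k}$ has genus $\geq 2$, Faltings gives finiteness of $\tilde Y_{n,k}(K)$ for each fixed $(n,k)$, but the union over all $(n,k)$ is an infinite union of finite sets, and nothing in your argument caps the type $(n,k)$. Your proposed remedy---invoking Theorem~\ref{th:badprimes}---cannot work: that theorem's constants depend on the number of bad places of $f_{\varphi(t_0)}$, which grows without bound as $t_0$ ranges over $K$ (this is exactly the phenomenon that makes Theorems~\ref{th:abc} and~\ref{th:dg} non-trivial, and why the paper singles them out as going beyond Benedetto-type results). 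Second, the genus claim itself is not established: the asserted total ramification of index $e$ above each distinguished pole comes only from the weighted-homogeneity heuristic, and you have said nothing about the ramification over the rest of the base or about irreducibility; Riemann--Hurwitz is gestured at but never run, and the claim that $N_e$ is ``calibrated'' for it is not supported. In short, the key missing idea is the local-to-global reduction via Lemma~\ref{lem:greens}\eqref{it:filled}, which the paper uses to replace a hard family of dynamical curves with a single, tractable superelliptic equation depending only on $\varphi$.
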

Another way of thinking of Benedetto's Theorem~\cite{benedetto} (restricted to the present context) is that it establishes uniform boundedness for the family $f_t$, not over $t\in K$, but over $t\in\bigcup_{|S|\leq N} \mathcal{O}_{K, S}$, for any $N$ (this also follows from the proof of Theorem~\ref{th:badprimes}). Similarly, Theorem~\ref{th:dg} proves uniform boundedness for $t \in \varphi(K)$ for a suitably general $\varphi$. The theorems are somewhat orthogonal, then, each proving uniform boundedness for parameters in some natural, although admittedly very thin, subset of $K$. The novelty of Theorem~\ref{th:dg} lies in the fact that it provides a uniform bound on preperiodic points for some non-trivial family of polynomials which may have arbitrarily many places of bad reduction.

For instance, it follows from Theorem~\ref{th:dg} that uniform boundedness of $\mathbb{Q}$-rational preperiodic points holds for the family
\[f_t(z)= z^2+\frac{1}{1+t^5},\]
as $t\neq -1$ varies over $\QQ$. This example is perhaps unsurprising, since it is clear that $z^2+c$ has a $\mathbb{Q}$-rational preperiodic point only when the denominator of $c$ is a square, and the denominator of $(t^5+1)^{-1}$ is unlikely to be a square very often (although some work is needed to show this when $t$ is not an integer!). Indeed, this is the motivation for the proof of Theorem~\ref{th:dg}, which leans heavily on a result of Darmon and Granville~\cite{darmon-granville}, which itself follows from Faltings' Theorem~\cite{faltings}. Using more specific results of Darmon and Merel~\cite{dm}, we can be more concrete.
\begin{theorem}\label{th:dm}
Let $2\mid d$ and $m\geq 4$, or $3\mid d$ and $m\geq 3$. Then for $t\in \mathbb{Q}$, the polynomial
\[z^d+\frac{1}{1+t^m}\]
has no $\mathbb{Q}$-rational preperiodic points (other than $z=\infty$).
\end{theorem}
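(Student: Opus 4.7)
The plan is to reduce the existence of a finite $\QQ$-rational preperiodic point to a generalised Fermat equation of signature $(m,m,2)$ or $(m,m,3)$, apply the theorems of Darmon and Merel~\cite{dm}, and dispose of the remaining trivial solutions by hand.

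First I would carry out a standard $\ell$-adic valuation analysis on $f(z) = z^d + c$. If $\ell$ is a prime dividing the denominator of $c$ (in lowest terms) with $v_\ell(c) = -n$, then tracking $v_\ell(f^k(z))$ step by step reveals that unless $v_\ell(f^k(z)) = -n/d$ for every $k$, the valuations are eventually strictly decreasing and tend to $-\infty$. In particular, a preperiodic point forces $d \mid n$, and running this for every prime in the denominator of $c$ shows that the denominator must be a perfect $d$-th power. This much is the same elementary observation made by the author in the discussion preceding the statement.

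Now write $t = u/v \in \QQ$ in lowest terms; then $c = v^m/(u^m + v^m)$, and the identity $\gcd(v^m, u^m + v^m) = \gcd(v^m, u^m) = 1$ exhibits this as already in lowest terms. The preceding paragraph therefore forces $|u^m + v^m| = w^d$ for some $w \in \ZZ_{\geq 0}$. If $2 \mid d$, writing $d = 2k$ one obtains a primitive integer solution to $X^m + Y^m = \pm Z^2$ with $m \geq 4$; if $3 \mid d$, writing $d = 3k$ one obtains a primitive solution to $X^m + Y^m = \pm Z^3$ with $m \geq 3$. The theorems of Darmon and Merel rule out all such primitive solutions other than the degenerate ones with $uvw = 0$. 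The only degenerate solution consistent with $t \in \QQ$ and $1 + t^m \neq 0$ is $u = 0$, corresponding to $t = 0$ and hence $c = 1$.

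It remains to verify by hand that $f(z) = z^d + 1$ admits no $\QQ$-rational preperiodic point: the valuation step confines any such point to $\ZZ$, and a short case analysis on the sign and size of $z$ shows that every integer orbit of $z \mapsto z^d + 1$ eventually exceeds $2$ and then grows strictly (e.g.\ the orbit of any nonneg integer $\geq 2$ is strictly increasing, $0 \mapsto 1 \mapsto 2$, and the handful of negative starting values feed into this same pattern within one or two steps). The chief obstacle is careful bookkeeping around the Darmon-Merel hypotheses — particularly sign conventions when $m$ is odd, and an explicit enumeration of which trivial primitive solutions are permitted in each signature — while the remaining steps are essentially routine.
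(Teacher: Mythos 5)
Your proof is correct and follows essentially the same argument as the paper: a $p$-adic valuation analysis forces $u^m+v^m=\pm w^d$ for coprime $u,v$, which after absorbing signs becomes a generalized Fermat equation of signature $(m,m,2)$ or $(m,m,3)$, ruled out by Darmon--Merel. You are in fact slightly more careful than the published proof about the degenerate solution $u=0$: the paper's parenthetical claims this case ``corresponds to $f(z)=z^d$,'' but for finite $t$ it actually yields $t=0$, $c=1$, and $f(z)=z^d+1$, the case you correctly dispatch by a direct orbit-growth argument over $\mathbb{Z}$.
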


Section~\ref{sec:local} gathers some material on Green's functions for weighted homogeneous families over local fields. Section~\ref{sec:badred} is devoted to the proof of Theorem~\ref{th:badprimes}, applying a pigeon-hole argument at the places of bad reduction, while Section~\ref{sec:abc} contains the proof of Theorem~\ref{th:abc}. Finally, Section~\ref{sec:darmon} proves Theorems~\ref{th:dg} and~\ref{th:dm}, employing results of Darmon-Granville~\cite{darmon-granville} and Darmon-Merel~\cite{dm}.


\section{Local considerations}\label{sec:local}

Let $K$ be a field with a set $M_K$ of  pairwise non-equivalent absolute values, for instance a number field with the usual set of absolute values, or $\CC$ with just the usual absolute value. We will, once and for all, fix an extension of each element of $M_K$ to the algebraic closure $\overline{K}$, and we will normalize non-archimedean valuations $v$ so that $v(K^*)=\ZZ$ (i.e., $v_p(z)=-\log_p|z|_p$ for $K=\QQ$). An \emph{$M_K$-constant} will be a real-valued function $v\mapsto \mathfrak{a}_v$ on $M_K$ which vanishes at all but finitely many valuations. It will be convenient, for much of the below, to set for $r>0$
\[r_v=\begin{cases} r & \text{if }v\text{ is archimedean}\\ 1 & \text{ otherwise,}\end{cases}\]
for instance so that $|x+y|_v\leq 2_v\max\{|x|_v, |y|_v\}$ for any valuation $v$. Note that $\log r_v$ is an $M_K$-constant, and that the set of $M_K$-constants is a vector space over $\RR$.

As usual, we write
\[\log^+ X =\log\max\{1, X\}\]
for $X\in \RR$.
For any $v\in M_K$ and $f(z)\in K[z]$ of degree $d\geq 2$, we let
\[G_{f, v}(P)=\lim_{n\to\infty}d^{-n}\log^+|f^n(P)|_v.\]
Note that, when $K$ is a number field and $M_K$ is the usual set of absolute values, the canonical height associated to $f$ is a weighted sum of the $G_{f, v}$.
We will assume throughout that $f$ is monic, an assumption which we will justify in the next section.
\begin{lemma}\label{lem:greens}
For a monic homogeneous family $f$ of degree $d$ and weight $e$, the following holds, with all constants independent of $t$ (but depending on the coefficients of the generic fibre).
\begin{enumerate}
\item \label{it:basin}There exist $M_K$-constants $\mathfrak{a}$ and $\mathfrak{b}$ such that for all $t, z\in K$ with \[\log|z|_v>\frac{1}{e}\log^+|t|_v+\mathfrak{a}_v\] we have
$|f_t(z)|_v\geq|z|_v$
and
\[\left|G_{f_t, v}(z)-\log|z|_v\right|\leq \mathfrak{b}_v.\]
Moreover, $\mathfrak{b}_v=0$ unless $v$ is archimedean.
\item\label{it:typeicontributes} There exist a finite set $S\subseteq M_K$, depending only on $K$ and the family $f$, such that for any $v\not\in S$ with $e\nmid v(t)$, we have
\begin{equation}\label{eq:Glower}G_{f_t, v}(z)\geq \frac{1}{d}\log^+|t|_v\end{equation}
for all $t, z\in K$.
\item\label{it:filled} There exists a finite set $S\subseteq M_K$, depending only on $K$ and the family $f$, such that  for any $t\in K$ and $v\not\in S$, if $|t|_v>1$ then any preperiodic point $z$ of $f_t$ satisfies $|z|^e_v=|t|_v$.
\end{enumerate}
\end{lemma}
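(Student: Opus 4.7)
My plan is to analyze $f_t(z)$ via two complementary representations: expansion around $z=\infty$ for part (A), and the factorization of $f_t$ over $\overline{K}$ for parts (B) and (C). Throughout, monicity of $f$ (of degree $d = en$) is essential.

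For part (A), I would write $f_t(z) = z^d\bigl(1 + \sum_{i=1}^n a_i(tz^{-e})^i\bigr)$, where $F(X,1) = X^n + a_1 X^{n-1} + \cdots + a_n$. I would choose $\mathfrak{a}_v$ so that the hypothesis $\log|z|_v > e^{-1}\log^+|t|_v + \mathfrak{a}_v$ forces $|a_i(tz^{-e})^i|_v < 1$ for every $i$; this can be done with $\mathfrak{a}_v = 0$ outside the finite set of non-archimedean places where some $|a_i|_v > 1$. For any non-archimedean $v$, ultrametricity then makes the bracketed factor have absolute value exactly $1$, so $|f_t(z)|_v = |z|_v^d$ and $\mathfrak{b}_v = 0$. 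In the archimedean case I would tune $\mathfrak{a}_v$ so that the sum has absolute value at most $1/2$, yielding a bounded $\mathfrak{b}_v$. Either way, $|f_t(z)|_v \geq |z|_v$ (using $d \geq 2$), so the hypothesis on $|z|_v$ is preserved under $f_t$; telescoping $\log|f_t^k(z)|_v = d^k\log|z|_v + O(d^{k-1}\mathfrak{b}_v)$, dividing by $d^k$, and letting $k\to\infty$ yields $|G_{f_t,v}(z) - \log|z|_v| \leq \mathfrak{b}_v$.

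For parts (B) and (C), I would use the factorization $f_t(z) = \prod_{j=1}^n(z^e - \beta_j t)$, where the $\beta_j \in \overline{K}^{\times}$ are the roots of $F(X,1)$---nonzero because $X \nmid F$. Enlarge $S$ to include the finitely many non-archimedean $v$ where some $|\beta_j|_v \neq 1$. For part (B), assume $v \notin S$ is non-archimedean, $|t|_v > 1$, and $e \nmid v(t)$. Then $ev(z) \neq v(t)$ for every $z \in K$, so $|z^e|_v \neq |\beta_j t|_v = |t|_v$, and the ultrametric gives $|z^e - \beta_j t|_v = \max(|z^e|_v, |t|_v)$ and hence $|f_t(z)|_v = \max(|z^e|_v, |t|_v)^n$. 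If $|z^e|_v > |t|_v$, part (A) applies to $z$ and gives $G_{f_t,v}(z) = \log|z|_v \geq e^{-1}\log|t|_v$. If $|z^e|_v < |t|_v$, then $|f_t(z)|_v = |t|_v^n$, so $|f_t(z)|_v^e = |t|_v^d > |t|_v$; part (A) applies to $f_t(z)$ and yields $G_{f_t,v}(z) = d^{-1}\log|f_t(z)|_v = e^{-1}\log|t|_v$. Since $e \leq d$, both cases give the claimed bound, and $|t|_v \leq 1$ is trivial.

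For part (C), preperiodicity forces $G_{f_t,v}(z) = 0$. If $|z|^e_v > |t|_v$, part (A) gives $G_{f_t,v}(z) = \log|z|_v > 0$, a contradiction. If $|z|^e_v < |t|_v$, the factorization still yields $|f_t(z)|_v = |t|^n_v$ (this time without needing the parity condition, since $|\beta_j|_v = 1$ alone forces $|z^e - \beta_j t|_v = |t|_v$), so $|f_t(z)|^e_v > |t|_v$, part (A) gives $G_{f_t,v}(f_t(z)) > 0$, and hence $G_{f_t,v}(z) > 0$, again a contradiction. Thus $|z|^e_v = |t|_v$. The main bookkeeping obstacle is arranging that the finite set $S$ and the $M_K$-constants $\mathfrak{a},\mathfrak{b}$ depend only on $K$ and the fixed coefficients of $F$ (not on $t$); once that is set up, the rest is routine manipulation of the ultrametric inequality and the definition of the Green's function.
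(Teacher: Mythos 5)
Your proof is correct and follows essentially the same structure as the paper's: part~(A) is established by showing the leading term $z^d$ dominates once $|z|_v^e$ is large relative to $|t|_v$, and parts~(B) and~(C) are then both deduced from the same trichotomy on $|z|_v^e$ versus $|t|_v$, using the factorization $f_t(z)=\prod_j(z^e-\beta_j t)$ with $|\beta_j|_v=1$ outside $S$ to get $|f_t(z)|_v=\max(|z^e|_v,|t|_v)^{d/e}$ and then applying part~(A) to either $z$ or $f_t(z)$. The only cosmetic difference is in part~(A): you expand $f_t(z)=z^d\bigl(1+\sum a_i(tz^{-e})^i\bigr)$ and bound the parenthetical factor, whereas the paper multiplies the bounds $|z^e-\beta_i t|_v\asymp|z|_v^e$ on the individual factors; both routes give $|f_t(z)|_v=|z|_v^d$ exactly at non-archimedean places (hence $\mathfrak{b}_v=0$) and a bounded multiplicative error at archimedean ones. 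One very minor point: in your telescoping you write the error as $O(d^{k-1}\mathfrak{b}_v)$; more precisely it accumulates geometrically as $O\bigl(\frac{d^k-1}{d-1}\bigr)$ times the per-step error, but this does not affect the conclusion after dividing by $d^k$ and passing to the limit.
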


\begin{proof}
By hypothesis, we have
\[f_t(z)=\prod_{i=1}^{d/e}(z^e-\beta_i t).\]
For all $z$ with $|z|_v^e>3_v\max\{1, |\beta_it|_v\}$, we have
\begin{equation}\label{eq:induction}|z|_v\leq |z|^d_v\left(\frac{2}{3}\right)_v^{d/e}\leq |f_t(z)|_v\leq |z|_v^d\left(\frac{4}{3}\right)_v^{d/e}\end{equation}
by the triangle and ultrametric inequalities. Taking
\[\mathfrak{a}_v = \frac{1}{e}\max\log^+|\beta_i|_v+\frac{1}{e}\log 3_v,\]
then, gives the first part of~\eqref{it:basin}. Now~\eqref{eq:induction} allows us to use induction, obtaining 
\begin{multline*}\log|z|_v+ \frac{1}{e}\left(1+d^{-1}+\cdots+d^{-(n-1)}\right)\log((2/3)_v)\\\leq d^{-n}\log |f^n_t(z)|_v\\\leq \log|z|_v+ \frac{1}{e}\left(1+d^{-1}+\cdots+d^{-(n-1)}\right)\log((4/3)_v)\end{multline*}
under the same assumption on $|z|_v$. Taking limits gives~\eqref{it:basin}.

The set $S$ in~\eqref{it:typeicontributes}  will consist of those places for which $\mathfrak{a}_v\neq 0$ or $\mathfrak{b}_v\neq 0$. Since~\eqref{eq:Glower} is immediate if $|t|_v\leq 1$, we will assume that $|t|_v>1$. Note that since $v(z)$ is an integer, and $e\nmid v(t)$, we must have $|z|^e_v\neq |t|_v$. If  $|z|_v^e>|t|_v$, then by~\eqref{it:basin} we have
\[G_{f_t, v}(z)=\log|z|_v\geq \frac{1}{e}\log^+|t|_v\geq \frac{1}{d}\log^+|t|_v.\]
Otherwise $|z|_v^e<|t|_v$. In this case $|f(z)|_v^e = |t|_v^{d}>|t|_v$, and hence
\[dG_{f_t, v}(z)=G_{f_t, v}(f_t(z))=\log|f_t(z)|\geq \log^+|t|_v,\]

Now \eqref{it:filled} follows from~\eqref{it:typeicontributes}, since any preperiodic point $z\in K$ must have $G_{f_t, v}(z)=0$ for all $v\in M_K$.
\end{proof}

Given a monic polynomial $f$ as above, we write
\[g_{f, v}(x, y)=-\log|x-y|_v+G_{f, v}(x)+G_{f, v}(y).\]
for the usual dynamical $v$-adic Arakelov-Green's function relative to $f$, as in \cite[Section~10.2]{br} and~\cite{baker1}. In terms of the construction in~\cite{br}, this definition corresponds to the lift $(x, 1)$ of $[x:1]\in \mathbb{A}^1\subseteq \mathbb{P}^1$ and $F(X, Y)=(Y^d f(X/Y), Y^d)$ of $f$, noting that the resultant term in the usual definition vanishes here under the hypothesis that $f$ is monic. We note that if $f$ has good reduction at $v$, then $g_{f, v}$ is simply the restriction of the logarithmic chordal metric on $\mathbb{P}^1$ to $\mathbb{A}^1$.

\begin{lemma}\label{lem:goodred}
Let $\mathfrak{a}$ and $\mathfrak{b}$ be the $M_K$-constants provided by Lemma~\ref{lem:greens}\eqref{it:basin}. Then for any $v\in M_K$ and $t\in K$ with $|t|_v\leq 1$, and any two distinct points $x, y\in K$, we have
\[g_{f_t, v}(x, y)\geq -\max\{\mathfrak{a}_v, \mathfrak{b}_v\}-\log 2_v.\]
\end{lemma}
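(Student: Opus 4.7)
The plan is to reduce the claim to a case analysis on the magnitude of $|x-y|_v$, using Lemma~\ref{lem:greens}\eqref{it:basin} together with the trivial lower bound $G_{f_t,v}(z) \geq 0$. The essential simplification is that, since $|t|_v \leq 1$, we have $\log^+|t|_v = 0$, so the threshold $\log|z|_v > \frac{1}{e}\log^+|t|_v + \mathfrak{a}_v$ appearing in Lemma~\ref{lem:greens}\eqref{it:basin} collapses to $|z|_v > e^{\mathfrak{a}_v}$.

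First I would handle the case when $x$ and $y$ are close: if $|x - y|_v \leq 2_v \, e^{\mathfrak{a}_v}$, then $-\log|x-y|_v \geq -\mathfrak{a}_v - \log 2_v$, and combining with $G_{f_t,v}(x), G_{f_t,v}(y) \geq 0$ immediately yields
\[
g_{f_t,v}(x,y) \geq -\mathfrak{a}_v - \log 2_v \geq -\max\{\mathfrak{a}_v, \mathfrak{b}_v\} - \log 2_v.
\]

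Next I would handle the far case $|x-y|_v > 2_v \, e^{\mathfrak{a}_v}$. From $|x-y|_v \leq 2_v \max\{|x|_v, |y|_v\}$ it follows that $\max\{|x|_v, |y|_v\} > e^{\mathfrak{a}_v}$; without loss of generality take $|x|_v \geq |y|_v$. Then $|x|_v$ satisfies the hypothesis of Lemma~\ref{lem:greens}\eqref{it:basin}, giving $G_{f_t,v}(x) \geq \log|x|_v - \mathfrak{b}_v$, while $G_{f_t,v}(y) \geq 0$. Bounding $-\log|x-y|_v \geq -\log|x|_v - \log 2_v$ and summing,
\[
g_{f_t,v}(x,y) \geq -\log|x|_v - \log 2_v + \log|x|_v - \mathfrak{b}_v = -\mathfrak{b}_v - \log 2_v,
\]
which again is bounded below by $-\max\{\mathfrak{a}_v,\mathfrak{b}_v\} - \log 2_v$.

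The two cases exhaust all possibilities, so combining them yields the claim. There isn't a substantive obstacle here; the only thing to be careful about is choosing the case split threshold $2_v \, e^{\mathfrak{a}_v}$ so that the $-\log|x-y|_v$ term in the close case is controlled by $\mathfrak{a}_v$, while in the far case the guaranteed largeness of $\max\{|x|_v,|y|_v\}$ lets Lemma~\ref{lem:greens}\eqref{it:basin} swallow the $\log|x|_v$ appearing in $-\log|x-y|_v$. In the non-archimedean situation this simplifies further, since $2_v = 1$ and $\mathfrak{b}_v = 0$.
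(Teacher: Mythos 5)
Your proof is correct and uses essentially the same ingredients as the paper: Lemma~\ref{lem:greens}\eqref{it:basin} in one branch, non-negativity of $G_{f_t,v}$ plus the inequality $|x-y|_v\leq 2_v\max\{|x|_v,|y|_v\}$ in the other. The only difference is cosmetic: you split on the size of $|x-y|_v$, whereas the paper splits on whether $\log\max\{|x|_v,|y|_v\}$ exceeds $\mathfrak{a}_v$, but the two case analyses produce the same bounds and cover the same ground.
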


\begin{proof}
Without loss of generality assume that $|x|_v\geq |y|_v$, so that $|x-y|_v\leq 2_v|x|_v$.

First, suppose that $\log|x|_v>\mathfrak{a}_v$. Then we have
\begin{eqnarray*}
g_{f_t, v}(x, y)&=&-\log|x-y|_v+G_{f_t, v}(x)+G_{f_t, v}(y)\\
&\geq & -\log|x|_v-\log 2_v+\log|x|_v-\mathfrak{b}_v\\&&+G_{f_t, v}(y)\\
&\geq & -(\mathfrak{b}_v+\log 2_v),
\end{eqnarray*}
by the non-negativity of $G_{f_t, v}$.

If, on the other hand,  $ \log|y|_v\leq \log|x|_v\leq \mathfrak{a}_v$, then we have
\[g_v(x, y)\geq -( \mathfrak{a}_v+\log 2_v)\]
directly from the non-negativity of $G_{f_t, v}$.
\end{proof}

\begin{lemma}\label{lem:pigeon}
 Assume that $d>e$, and again that the family $f$ is monic. There exists a $\delta>0$ and an $M_K$-constant $\mathfrak{c}$ depending only on the  family $f$ and field $K$ 
such that 
for any $x\in K$, any place $v\in M_K$, and any set $X$ of at least $d+3$ non-negative integers, one of the following conditions obtains:
\begin{enumerate}
\item \label{it:repeats} there exist distinct $i, j\in X$ with $f^i_t(x)=f^j_t(x)$, or
\item \label{it:pairsareclose}   there exists a subset $Y\subset X$  such that $\# Y\geq \left(\frac{1}{d+2}\right)\# X$ and for all distinct $i, j\in Y$,
\[g_{f_t, v}\left(f^i_t(x), f_t^j(x)\right)\geq \delta\log^+|t|_v-\mathfrak{c}_v.\]
\end{enumerate}
\end{lemma}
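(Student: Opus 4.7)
The plan is to combine the forward invariance of the escape region from Lemma~\ref{lem:greens}(\ref{it:basin}) with a geometric description of the bounded region, then run pigeonhole on $d+2$ classes. First, when $|t|_v \leq 1$ I would simply take $Y = X$ (assuming the $P_k := f_t^{i_k}(x)$ are distinct, else conclusion~(\ref{it:repeats}) already holds), since $\log^+|t|_v = 0$ and Lemma~\ref{lem:goodred} already provides a uniform lower bound on $g_{f_t,v}(P_i, P_j)$, which is absorbed into $\mathfrak{c}_v$.

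For $|t|_v > 1$, let $B_v = \{z : \log|z|_v > \frac{1}{e}\log|t|_v + \mathfrak{a}_v\}$ be the forward-invariant escape region, on which $G_{f_t,v}(z) = \log|z|_v + O(1)$ by Lemma~\ref{lem:greens}(\ref{it:basin}). The central geometric input I need is that the one-step non-escape set
\[
\{z \in K \setminus B_v : f_t(z) \in K \setminus B_v\}
\]
is contained in a union of at most $d$ disks $D_1, \ldots, D_d$ centred at the roots of $f_t(z) = \prod_{i=1}^{d/e}(z^e - \beta_i t)$ in $\overline{K}$, each of radius at most $C|t|_v^{(2-d)/e}$. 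I would prove this via a Taylor expansion: for a root $\alpha$ with $\alpha^e = \beta_{i_0}t$, direct computation gives $|f_t'(\alpha)|_v \asymp |t|_v^{(d-1)/e}$, so writing $z = \alpha + \epsilon$ the requirement $|f_t(z)|_v \leq |t|_v^{1/e}$ forces $|\epsilon|_v \ll |t|_v^{(2-d)/e}$; outside of such small disks the product $\prod_i|z^e - \beta_i t|_v$ is too large for $f_t(z)$ to stay in the bounded region.

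Next I would partition $\{P_1, \ldots, P_N\}$ into $d+2$ classes: the escape region $B_v$, the $d$ small disks $D_1, \ldots, D_d$, and a ``leftover'' class for bounded $P_k$ with $f_t(P_k) \in B_v$. The crucial observation is that the leftover class contains at most one of the iterates: if $P_k$ were leftover, then by forward invariance $f_t^j(P_k) \in B_v$ for every $j \geq 1$, so no later iterate $P_l$ with $l > k$ could be bounded. Assuming conclusion~(\ref{it:repeats}) fails (so the $P_k$ are distinct), pigeonhole on the remaining $d+1$ classes applied to at least $\#X - 1$ iterates produces a class containing at least $(\#X-1)/(d+1) \geq \#X/(d+2)$ of them (the inequality using $\#X \geq d+3$); this class is $Y$.

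Finally I would verify the separation bound on $Y$. If $Y \subseteq B_v$, then for any pair both Green's-function terms exceed $\frac{1}{e}\log|t|_v + O(1)$, and $|P_i - P_j|_v \leq 2_v\max(|P_i|_v, |P_j|_v)$ gives matching cancellation in $-\log|P_i - P_j|_v$, so $g_{f_t,v}(P_i, P_j) \geq \frac{1}{e}\log|t|_v + O(1)$. If instead $Y$ lies in one of the disks $D_\ell$, then $|P_i - P_j|_v \leq C|t|_v^{(2-d)/e}$ gives $-\log|P_i - P_j|_v \geq \frac{d-2}{e}\log|t|_v - \log C$, and non-negativity of $G_{f_t,v}$ finishes the estimate. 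Since $d > e \geq 2$ implies $\frac{d-2}{e} \geq \frac{1}{e} > 0$, choosing $\delta = \frac{1}{e}$ works uniformly. The main obstacle I anticipate is making the disk-radius estimate uniform across both archimedean and non-archimedean places with $t$-independent constants --- the archimedean case in particular requires carefully absorbing higher-order Taylor terms into the $M_K$-constant $\mathfrak{c}$.
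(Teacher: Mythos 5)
Your overall strategy — a dichotomy on $|t|_v$, escape region versus small disks around roots of $f_t$, a singleton ``leftover'' class, and pigeonhole on $d+2$ classes — matches the paper's argument quite closely. However, your disk-radius estimate has a genuine gap: you compute $|f_t'(\alpha)|_v \asymp |t|_v^{(d-1)/e}$ and conclude the non-escape disk around $\alpha$ has radius $\ll |t|_v^{(2-d)/e}$, but this presupposes that the root $\alpha$ of $f_t$ is \emph{simple}. The definition of a weighted homogeneous family allows $F(X,Y)$ to have repeated factors (e.g.\ $F(X,Y)=(X-Y)^2$ gives $f_t(z)=(z^e-t)^2$), in which case $f_t'(\alpha)=0$ and your Taylor expansion must go to order $m=\alpha_i$, the multiplicity. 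At a root of multiplicity $m$, the leading coefficient scales like $|t|_v^{(d-m)/e}$, so the non-escape condition $|f_t(z)|_v\lesssim |t|_v^{1/e}$ gives radius $\asymp|t|_v^{(1-d+m)/(em)}$, which for $m>1$ is strictly \emph{larger} than your claimed $|t|_v^{(2-d)/e}$. The paper addresses this by writing $f_t(z)=\prod(z^e-\beta_i t)^{\alpha_i}$ with the $\beta_i$ distinct and using the inequality $\log|f_1(w)|_v\geq \min_{i,\,\zeta^e=\beta_i}\alpha_i\log|w-\zeta|_v-\mathfrak{e}_v$, which keeps explicit track of multiplicities.

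This is not merely cosmetic: it breaks your choice of $\delta$. You set $\delta=1/e$ on the grounds that $\frac{d-2}{e}\geq\frac{1}{e}$, but with a root of multiplicity $m$ the relevant exponent becomes $\frac{d-1-m}{em}$, which can drop below $1/e$. A concrete counterexample: $e=2$, $d=4$, $m=2$ (the family $(z^2-t)^2$) gives $\frac{d-1-m}{em}=\frac{1}{4}<\frac{1}{2}=\frac{1}{e}$. The correct uniform bound uses $e\alpha_i\leq d$ and $\alpha_i/d\leq 1/e$ to get $\frac{d-1-\alpha_i}{e\alpha_i}\geq 1-\frac{1}{e}-\frac{1}{d}$, forcing $\delta=\min\bigl\{\frac{1}{e},\,1-\frac{1}{e}-\frac{1}{d}\bigr\}$ as the paper takes; this is still positive since $d>e\geq 2$ together with $e\mid d$ forces $d\geq 2e\geq 4$. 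So your proof is correct exactly when the binary form $F$ is squarefree, and needs the multiplicity-aware version of the disk estimate to cover the general case.
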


\begin{proof}
We prove the lemma with
\[\delta=\min\left\{\frac{1}{e}, 1-\frac{1}{e}-\frac{1}{d}\right\}.\]

First note that if $|t|_v\leq 1$ and case~\eqref{it:repeats} does not occur, then we may apply Lemma~\ref{lem:goodred} and deduce that item~\eqref{it:pairsareclose} happens for $Y=X$.

From this point forward, assume that $|t|_v>1$ and that case~\ref{it:repeats} does not occur. Note that $\log^+|t|_v>0$.

Let $\mathfrak{a}$ and $\mathfrak{b}$ be the $M_K$-constants from Lemma~\ref{lem:greens}\eqref{it:basin}. Note that if  \begin{equation}\label{eq:fjbig}\log|f^j(z)|_v>\frac{1}{e}\log^+|t|_v+\mathfrak{a}_v,\end{equation} for any $j$,
then $\log|f^{j+1}(z)|_v\geq \log|f^j(z)|_v$,
by  Lemma~\ref{lem:greens}\eqref{it:basin}.  By induction,
\[|f^i_t(z)|_v\geq|f^j_t(z)|_v>\frac{1}{e}\log^+|t|_v+\mathfrak{a}_v\]
for every $i>j$, and hence
\begin{eqnarray*}
g_{f_t, v}(f^i_t(x), f^j_t(x))&=& -\log|f_t^i(x)-f_t^j(x)|_v+G_{f_t, v}(f^i(x))+G_{f_t, v}(f^j(x)) \\
&\geq& -\log|f_t^i(x)|_v-\log 2_v + \log|f_t^i(x)|_v\\&&+\log|f_t^j(x)|_v-2\mathfrak{b}_v\\
&\geq & \frac{1}{e}\log^+|t|_v+\mathfrak{a}_v-\log 2_v-2\mathfrak{b}_v.
\end{eqnarray*}
Let $Y_1\subseteq X$ be the (terminal) subset of indices $j$ for which~\eqref{eq:fjbig} holds. Note that, since~\eqref{eq:fjbig} for $j=k$ implies the same for $j=k+1$, $Y_1$ is a terminal subset of $X$, in the sense that $Y_1=X\cap [J, \infty]$ for some $J$.
If we have $\#X \leq (d+2)\# Y_1$, then we make take $Y=Y_1$ and the proof is complete, since $\delta\leq \frac{1}{e}$, as long as we take $\mathfrak{c}_v\geq \log 2_v+2\mathfrak{b}_v-\mathfrak{a}_v$.

We now assume that $\#X>(d+2)\# Y_1$, and let $X_1=X\setminus Y_1$ so that $\#X_1>\left(\frac{d+1}{d+2}\right)\#X$. For the purpose of this proof, re-write $f_t(z)$ as
\[f_t(z)=\prod (z^e-\beta_i t)^{\alpha_i},\]
with the $\beta_i$ distinct.  Then there is an $M_K$-constant $\mathfrak{e}$ such that
\[\log|f_1(z)|_v\geq \min_{i, \zeta^e=\beta_i} \alpha_i\log|z-\zeta|_v-\mathfrak{e}_v,\]
by the Mean Value Theorem. Write $t=u^e$ (note that $|u|_v>1$). We then have
\[\log|f_t(z)|_v\geq d\log^+|u|_v + \min_{i, \zeta^e=\beta_i}\alpha_i\log\left|\frac{z}{u}-\zeta\right|_v-\mathfrak{e}_v.\]
 Consequently, if $|f(z)|_v\leq \frac{1}{e}\log^+|t|_v+\mathfrak{a}_v$, then we have
\begin{equation}\label{eq:closeto}\alpha_i\log\left|\frac{z}{u}-\zeta\right|_v<(1-d)\log^+|u|_v+\mathfrak{a}_v+\mathfrak{e}_v,\end{equation}
for some $i$ and some $\zeta^e=\beta_i$.
If two points $z_1, z_2$ satisfy~\eqref{eq:closeto} for the same $\zeta$, we then have
\[\log\left|\frac{z_1}{u}-\frac{z_2}{u}\right|_v\leq \frac{1-d}{\alpha_i}\log^+|u|_v+\frac{\mathfrak{a}_v+\mathfrak{e}_v}{\alpha_i}+\log 2_v,\]
and hence
\[\log\left|z_1-z_2\right|_v\leq \frac{1-d+\alpha_i}{\alpha_i}\log^+|u|_v+\frac{\mathfrak{a}_v+\mathfrak{e}_v}{\alpha_i}+\log 2_v,\]
It follows that
\[g_{f_t, v}(z_1, z_2)\geq \frac{d-1-\alpha_i}{e\alpha_i}\log^+|t|_v-\left(\frac{\mathfrak{a}_v+\mathfrak{e}_v}{\alpha_i}+\log 2_v\right)\]
by the non-negativity of $G_{f_t, v}$. Note that $e\alpha_i\leq d$, and so we have
\[g_{f_t, v}(z_1, z_2)\geq \left(1-\frac{1}{e}-\frac{1}{d}\right)\log^+|t|_v-\left(\frac{\mathfrak{a}_v+\mathfrak{e}_v}{\alpha_i}+\log 2_v\right).\]

Now, we partition $X_1$ into no more than $d+1$ sets. First, let $W$ be the set of indices $j\in X_1$ for which $\log|f^{j+1}(x)|v>\frac{1}{e}\log^+|t|_v+\mathfrak{a}_v$. Note that this is at most a single value, since $j\in W$ would imply $i\in Y_1$, for any $i>j$. Now, for each $i$ and each $\zeta^e=\beta_i$ we let $W_\zeta$ be the set of indices $j$ such that~\eqref{eq:closeto} is satisfied by $f^j(z)$. Since there are at most $d$ such values $\zeta$, and since $W$ is a singleton, there must be a set $W_\zeta$ containing at least $\#X_1/(d+1)\geq\#X/(d+2)$ elements. This completes the proof, as long as we take \[\mathfrak{c}_v\geq  \max\{0, \mathfrak{a}_v+\mathfrak{e}_v\}+\log 2_v\quad\text{ and }\quad\delta\leq 1-\frac{1}{e}-\frac{1}{d}.\]
\end{proof}


\section{Canonical height lower bound in places of bad reduction}\label{sec:badred}

The statements in the introduction involve  $\hft$, but it is much simpler in the present context to work in terms of $h(t)$. We first demonstrate that this is an acceptable proxy.

\begin{lemma}\label{lem:useht}
For a weighted homogeneous family $f$,
\[\hft\ll h(t)\]
with implied constants depending on the family.
\end{lemma}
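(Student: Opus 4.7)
My plan is to bound each of the two terms inside the maximum separately by $O(h(t))$, using only that a weighted homogeneous family $f_t(z)=F(z^e,t)$ has coefficients which are monomials in $t$ of bounded degree.

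\textbf{Step 1: bounding $h_{\mathsf{M}_d}(f_t)$.} Writing $F(X,Y)=\sum_{i=0}^{d/e}\alpha_i X^i Y^{d/e-i}$, the polynomial $f_t(z)$ has coefficients $\alpha_i t^{d/e-i}$ (placed at the $ei$-th slot, with all other slots zero), so the assignment $t\mapsto f_t$ defines a morphism $\mathbb{P}^1\to\mathbb{P}^{d+1}$ (the ambient space of degree-$d$ monic polynomials, say) of degree $d/e$. Composing with the GIT quotient $\mathbb{P}^{d+1}\dashrightarrow\mathsf{M}_d$ gives a rational map $\mathbb{P}^1\dashrightarrow\mathsf{M}_d$. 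By functoriality of Weil heights (the fact that heights pull back linearly under morphisms into quasi-projective varieties, up to $O(1)$), this yields
\[h_{\mathsf{M}_d}(f_t)\leq c_1\, h(t)+O(1)\]
with $c_1$ depending on the family $f$ and on the chosen ample class on $\mathsf{M}_d$.

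\textbf{Step 2: bounding $\log|\operatorname{Norm}\mathfrak{R}_{f_t}|$.} Since the minimal resultant is computed place-by-place as the minimum valuation of $\operatorname{Res}(F)$ over $\operatorname{PGL}_2$-conjugates $F$ of $f_t$ with integral coefficients at $v$, we have $v(\mathfrak{R}_{f_t})\le v(\operatorname{Res}(f_t))$ at every non-archimedean $v$ where the latter is non-negative, and in general the standard product-formula accounting gives
\[\log|\operatorname{Norm}\mathfrak{R}_{f_t}|\leq [K:\QQ]\cdot h(\operatorname{Res}(f_t))+O(1).\]
But $\operatorname{Res}(f_t)$, being the resultant of two degree-$d$ forms whose coefficients are polynomials in $t$ of degree at most $d/e$, is itself a polynomial in $t$ over $K$ of degree bounded in terms of $d$ and $e$. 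Hence $h(\operatorname{Res}(f_t))\leq c_2\, h(t)+O(1)$, and combining the two inequalities gives the desired estimate.

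\textbf{Where the work lies.} Neither step is hard, but the moduli-space bound requires interpreting the family as a morphism into $\mathsf{M}_d$ and invoking functoriality of heights with respect to different ample classes; this is the only place where the constants genuinely depend on the fixed ample divisor on moduli space. The resultant step is essentially a height calculation for a polynomial in $t$, combined with the standard inequality relating $\log|\operatorname{Norm}(\cdot)|$ of an ideal with the height of a generator of a multiple of it. Taking the maximum of the two bounds gives $\hft\ll h(t)$, with implied constant depending only on $K$ and the family.
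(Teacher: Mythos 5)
Your Step 1 is essentially the paper's argument: realize $t\mapsto f_t$ as a morphism of smooth projective curves into (a compactification of) $\mathsf{M}_d$ and invoke functoriality of heights. The paper packages this by passing to the normalization of the image curve rather than through an ambient projective space, but the idea is the same and your version is fine modulo the cosmetic issue that ``rational map'' should become ``morphism'' after extending over the curve.

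Step 2, however, has a genuine gap. For a monic polynomial map the homogenized pair is $[F_t(X,Y):Y^d]$, and by multiplicativity $\operatorname{Res}(F_t,Y^d)=F_t(1,0)^d$, which is the leading coefficient raised to the $d$th power --- a \emph{constant} independent of $t$ (equal to $1$ in the monic case, or $a_d^{2d}$ for the model $[a_dF_t:a_dY^d]$). So the ``resultant of the naive model'' is not a polynomial of positive degree in $t$, and $h(\operatorname{Res}(f_t))=O(1)$. Consequently your inequality
\[
\log|\operatorname{Norm}\mathfrak{R}_{f_t}|\leq [K:\QQ]\,h(\operatorname{Res}(f_t))+O(1)
\]
would force $\log|\operatorname{Norm}\mathfrak{R}_{f_t}|$ to be bounded, which is false: for $f_t(z)=z^2+t$ and $t=1/p$, scaling $[X^2+tY^2:Y^2]$ to the integral model $[pX^2+Y^2:pY^2]$ gives $\operatorname{Res}=p^4$, so $\log|\operatorname{Norm}\mathfrak{R}_{f_t}|=4\log p$ is unbounded. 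What is missing from your argument is exactly the content of the paper's place-by-place computation: at a place where $v(t)=-m<0$, the coefficients of $f_t$ acquire denominators, and making the pair integral requires scaling by $\pi^{dm/e}$, which multiplies the resultant by $\pi^{2d^2 m/e}$. Summing these contributions over the places where $t$ is non-integral is what produces the bound $\log|\operatorname{Norm}\mathfrak{R}_{f_t}|\leq [K:\QQ]\frac{2d^2}{e}h(t)+O(1)$. Your appeal to ``the standard product-formula accounting'' does not substitute for this: the minimal resultant accumulates at places where the \emph{coefficients} of $f_t$ have poles, not at places where $\operatorname{Res}(f_t)$ of a fixed global model has poles, and for a polynomial family these are not the same thing.
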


\begin{proof}
The relation
\[h_{\mathsf{M}_d}(f_t)\ll h(t)\]
follows from standard facts about heights. Indeed, $t\mapsto f_t$ induces a morphism $\mathbb{A}^1\to\mathsf{M}_d$, whose image is a curve $\Gamma$ (since the family is not isotrivial, but in any case the inequality is trivial otherwise). If $\widetilde{\Gamma}$ is the closure of the normalization of $\Gamma$, $\psi:\mathbb{P}^1\to \widetilde{\Gamma}$ is the completion of the aforementioned map $\mathbb{A}^1\to \Gamma$, and $D$ is the class obtained by restricting the ample class on $\mathsf{M}_d$ relative to which one is computing heights, we have
\[\deg(\psi^*D)h(t)= h_{\mathbb{P}^1, \psi^*D}(t)+O(1) =h_{\widetilde{\Gamma}, D}(f_t)+O(1)=h_{\mathsf{M}_d}(f_t)+O(1)\]
from the functoriality of height, where the implied constant depends just on the family.
(Note that, in a slight abuse of notation, we are using $f_t$ here to denote the point in $\mathsf{M}_d$ corresponding to the polynomial $f_t$.)

It remains to show that $\log|\operatorname{Norm}\mathfrak{R}_{f_t}|\ll h(t)$, for which it suffices to show that $\log|\operatorname{Norm}\operatorname{Res}(f_t)|\ll h(t)$. Here $\mathfrak{R}_{f_t}$ denotes the minimal resultant, and $\operatorname{Res}(f_t)$ the resultant of this particular model \cite[p.~220]{ads}.

Let $\mathfrak{p}$ be a prime of $K$ and $\mathcal{O}_\mathfrak{p}$ be the ring of $\mathfrak{p}$-integers. Note that for any monic polyomial $g(z)\in K[z]$
\[g(z)=\frac{a_dz^d+\cdots +a_1 z +a_0}{a_d},\]
written with $a_i\in \mathcal{O}_{\mathfrak{p}}$, we have $\operatorname{Res}(g)\mathcal{O}_{\mathfrak{p}}\mid a_d^{2d}\mathcal{O}_{\mathfrak{p}}$ (with equality if $a_i\in \mathcal{O}_{\mathfrak{p}}^\times$ for some $i$). In particular,  we may write
\begin{equation}\label{eq:genericf}f_t(z)=\frac{a_dz^d+\cdots+a_e z^et^{\frac{d}{e}-1} +a_0t^{\frac{d}{e}}}{a_d}\end{equation}
with $a_i$ algebaic integers independent of $t$, and conclude that $v_{\mathfrak{p}}(\operatorname{Res}(f_t))\leq 2dv_{\mathfrak{p}}(a_d)$ whenever  $t\in \mathcal{O}_{\mathfrak{p}}$.

If, on the other hand, $t\not\in\mathcal{O}_\mathfrak{p}$, write $t=t_0/\pi^m$ with $\pi$ a uniformizer and $t_0$ a unit. Then if $f_t$ is written as in~\eqref{eq:genericf}, the representation
\[f_t(z)=\frac{\pi^{dm/e}a_dz^d+\cdots +a_e\pi^{m} z^et_0^{(d/e)-1}+a_0t_0^{d/e}}{\pi^{dm/e}a_d}\]
has integral coefficients. We thus have
\[v_{\mathfrak{p}}(\operatorname{Res}(f_t))\leq v_{\mathfrak{p}}\left((\pi^{dm/e}a_d)^{2d}\right)= \frac{2d^2}{e}v_{\mathfrak{p}}(t^{-1})+2dv_\mathfrak{p}(a_d).\]
Summing over all places, we have
\[\log|\operatorname{Norm}\mathfrak{R}_{f_t}|\leq \log|\operatorname{Norm}\operatorname{Res}(f_t)|\leq [K:\QQ]\frac{2d^2}{e}h(t)+[K:\QQ]2dh(a_d),\]
an explicit form of the desired estimate.
\end{proof}

It is now a relatively simple matter to prove Theorem~\ref{th:badprimes}.

\begin{proof}[Proof of Theorem~\ref{th:badprimes}]
First, we note that we may assume, without loss of generality, that our family of polynomials is monic. In particular, our aim in light of Lemma~\ref{lem:useht} is to prove that
\[\hat{h}_{f_t}(z)\geq \epsilon h(t)-C\]
for all $z, t\in K$ such that $z$ is not preperiodic for $f_t$, and with $\epsilon>0$ and $C$ depending just on the number of valuations $v$ of $K$ with $v(t)<0$. Note that if we prove the claim after a finite extension $L/K$, then the claim is true over $K$ as well, since the heights are well-defined on $\overline{K}$, and since above each place of $K$ their lie between 1 and $[L:K]$ places of $L$. Our hypothese on the form of $f$ imply that
\[f_t(z)=a_dz^d+\cdots + a_0t^{d/e},\]
where $a_d\in K$ does not depend on $t$. Making a finite extension if necessary, write $a_d=\alpha^{d-1}$, and set $g_t(z)=\alpha f_t(\alpha^{-1}z)$, which is monic. If the result is established for the family $g$, then it follows for $f$ simply by noting that $\hat{h}_{f_t}(z)=\hat{h}_{g_t}(\alpha z)$ for all $t, z\in \overline{K}$. So for the rest of the proof we assume that our family is monic.

We now treat the case $d=e$, wherein our family must have the form $f_t(z)=z^d+bt$ for some nonzero $b\in K$. Note that $h(t) = h(bt)+O(1)$, and the number of places at which $t$ is not integral differs by at most a constant from the number of places at which $bt$ is not integral, so the result follows immediately from the main result of~\cite{ads_lang}.

Now assume that $d>e$, and let $f$ be a monic family as described in the statement of the theorem. Let $S=\{v_1, v_2, ..., v_{\#S}\}$ be a set of places containing all of those at which $|t|_v>1$ along with all archimedean places, and suppose that $t\in K$ is integral at all places $v\not\in S$. Let
\[X_0=\left\{0, 1, 2, ..., 2(d+2)^{\# S}\right\}.\]
For any given $x\in K$, if $f^i(x)=f^j(x)$ for some distinct $i,j\in X_0$, then we are done. If not, for each $1\leq i\leq \# S$ choose a subset $X_i\subseteq X_{i-1}$ as per Lemma~\ref{lem:pigeon}\eqref{it:pairsareclose}. We are left with a subset $X_{\# S}$, containing at least two distinct values $i, j$ and such that
\[g_{f_t, v}(f^i(x), f^j(x))\geq \delta\log^+|t|_v-\mathfrak{d}_v\]
for any archimedean $v$, and any $v$ with $|t|_v>1$. Applying Lemma~\ref{lem:goodred} in the case of places with $|t|_v\leq 1$, we have
\[g_{f_t, v}(f^i(x), f^j(x))\geq \delta\log^+|t|_v-\mathfrak{d}_v\]
for all $v$ (taking $\mathfrak{d}_v=\max\{\mathfrak{a}_v, \mathfrak{b}_v\}+\log 2_v$ in cases where we apply Lemma~\ref{lem:goodred}). Summing over all places, we have
\[2d^{2(d+2)^{\# S}}\hat{h}_{f_t}(x)\geq \hat{h}_{f_t}(f^i(x))+\hat{h}_{f_t}(f^j(x))\geq \delta h(t)-C',\]
with $C'=\sum_{v\in M_K}\frac{[K_v:\QQ_v]}{[K:\QQ]}\mathfrak{d}_v$, from which we deduce
\[\hat{h}_{f_t}(x)\geq \epsilon h(t)-C,\]
with $\epsilon=\delta/2d^{2(d+2)^{\# S}}$, and $C=C'/2d^{2(d+2)^{\# S}}$.

So now we have shown that for any set of places $S$ containing all archimedean places, any $t\in K$ which is integral away from $S$, and any $z\in K$, either $f^i_t(z)=f^j_t(z)$ for some $1\leq i< j\leq 2(d+2)^{\#S}$, or else $\hat{h}_{f_t}(z)\geq \epsilon h(t)-C$, with $\epsilon$ and $C$ as above. The theorem follows by noting that for any $t\in K$ integral at all but at most $s$ primes of $K$, we may apply this argument to any set of places containing these places and the at most $[K:\QQ]$ archimedean places.
\end{proof}

\begin{remark}
Note that the proof of Theorem~\ref{th:badprimes} can be improved so that the constants only depend on the number of places $v$ such that $e\mid v(t)$. To see this, if $T$ is the set $S$ of primes from Lemma~\ref{lem:greens}\eqref{it:typeicontributes}, and $v$ is a place with $e\nmid v(t)$, then we have
\[G_{f_t, v}(z)\geq \frac{1}{d}\log^+|t|_v\]
for all $z$. In particular, if $|z|_v\geq |w|_v$, without loss of generality, we have
\[g_{f_t, v}(z, w)\geq \frac{1}{d}\log^+|t|_v+G_{f_t, v}(z)-\log|z|.\]
Since Lemma~\ref{lem:greens}\eqref{it:basin} certainly implies (recall that $\mathfrak{a}_v\mathfrak{b}_v=0$ for $v\not\in T$) that 
\[G_{f_t, v}(z)\geq \log|z|_v,\]
we then have
\[g_{f_t, v}(z, w)\geq \frac{1}{d}\log^+|t|_v,\]
for \emph{any} $z, w\in K$, given that $e\nmid v(t)$ and $v\not\in T$. So in the proof of Theorem~\ref{th:badprimes}, we need only apply Lemma~\ref{lem:pigeon} to archimedean places and those in $T$. This reduces the dependence of the constants in the statement of Theorem~\ref{th:badprimes} to the number of places $v$ with $v(t)<0$ and $e\mid v(t)$.
\end{remark}


\section{Canonical height lower bound for base extensions}\label{sec:abc}

This section concerns the proof of Theorem~\ref{th:abc} and, to start, we explain the motivation of the argument. It follows from the non-negativity of the Green's functions $G_{f_t, v}$ and Lemma~\ref{lem:greens}~\eqref{it:typeicontributes}  that for any weighted homogeneous family $f_t$ there is a finite set $S$ of places such that
\begin{eqnarray}\label{eq:typeilower}\hat{h}_{f_t}(z)&=& \sum_{v\in M_K}\frac{[K_v:\mathbb{Q}_v]}{[K:\mathbb{Q}]} G_{f_t, v}(z)\nonumber\\
&\geq & \frac{1}{d}\sum_{\substack{v\not\in S \\ e \nmid v(t)}}\frac{[K_v:\mathbb{Q}_v]}{[K:\mathbb{Q}]}\log^+|t|_v\end{eqnarray}
for all $z\in K$. In other words, with finitely many exceptions, primes in the denominator of $t$ occuring to powers not divisible by $e$ contribute non-trivially to the canonical height of every point. On the other hand, one should generally expect that the bulk of the height of $\varphi(s)$, where $\varphi:\mathbb{P}^1\to\mathbb{P}^1$ is sufficiently generic rational function and $s\in \mathbb{P}^1(K)$, should come from primes occuring to small powers. In other words, for $t=\varphi(s)$, one should expect the right-hand-side of~\eqref{eq:typeilower} to constitute some positive proportion of $h(t)$, given some assumptions about $\varphi$. An explicit form of this heuristic follows from the $abc$ Conjecture.

%

We now recall the $abc$ Conjecture, in a form useful for the current setting (see~\cite{elkies}). For any $v\in M_K^0$, we will define $\deg(v)=\log|\operatorname{Norm}_{K/\QQ}(\mathfrak{p})|$, where $\mathfrak{p}$ is the maximal ideal in the ring of $v$-integers.  
Given this, we define for $a\in \mathbb{P}^1$
\[N_a(x)=\sum_{v(x-a)>0}\deg(v),\]
where $x-a$ is interpreted as $1/x$ when $a=\infty$. The $abc$ Conjecture is now the statement that for any $\epsilon>0$ there exists a constant $C_\epsilon$ with
\begin{equation}\label{eq:abc}(1-\epsilon)[K:\mathbb{Q}]h(x)\leq N_0(x)+N_1(x)+N_\infty(x)+C_\epsilon\end{equation}
for all $x\in \mathbb{P}^1(K)\setminus \{0, 1, \infty\}$.
For our purposes, it will be convenient to define, for a finite $S\subseteq M_K$ containing all infinite places,
\[N_{a, S}(x)=\sum_{\substack{v\not\in S\\ v(x-a)>0}}\deg(v),\]
and note that replacing $N_{a}$ with $N_{a, S}$ in \eqref{eq:abc} does not change the content of the conjecture, although of course the constant $C_\epsilon$ now depends on $S$. More generally, the conjecture is equivalent (see~\cite{elkies}) to the claim that for any set $Z$ of points,
\[(\#Z-2-\epsilon)[K:\mathbb{Q}]h(x)\leq \sum_{a\in Z}N_{a, S}(x)+C_\epsilon.\]
One direction is clear by taking $Z=\{0, 1, \infty\}$, while the other direction is deduced in~\cite{elkies} by applying a Belyi map that sends $Z$ to these three points. Inspired in part by \cite{granville}, we will use this form of the conjecture to show that a significant part of the height of $\varphi(t)$ comes from primes in the denomonator occuring to powers not divisible by $e$.

We will also specify local height functions at non-archimedean places as follows: for $a\in \mathbb{A}^1$, we set
\[\lambda_{[a], v}(t)=\frac{1}{\mathfrak{e}(v)}\max\{0, v(t-a)\}\log p_v=\log^+\left|\frac{1}{t-a}\right|_v,\]
where $\mathfrak{e}(v)$ is the ramification index of $v$, and $p_v$ the prime of $\QQ$ below $v$. For $a=\infty$, we replace $t-a$ by $1/t$. Note that we have normalized so that the definition is independent of $K$, in the sense that if $L/K$ is a finite extension and $w\mid v$ is a place of $L$, then $\lambda_{[a], v}$ is the restriction to $K$ of $\lambda_{[a], w}$.
 Note also that, for any $a$ and any set $T$ of places, we have
\begin{eqnarray}\label{eq:nonneg}\sum_{v\in T}[K_v:\QQ_v]\lambda_{[a], v}(t)&\leq& \sum_{v\in M_K} [K_v:\QQ_v]\lambda_{[a], v}(t)\nonumber\\&=& [K:\mathbb{Q}]h\left(\frac{1}{t-a}\right)\\&=&[K:\mathbb{Q}]h\left(t\right)+O_a(1)\nonumber,\end{eqnarray}
 since the $\lambda_{[a], v}$ are all non-negative. 

\begin{proof}[Proof of Theorem~\ref{th:abc}]
We will begin by describing a finite set $S$ of places of $K$.
Write
\[\varphi(t)=\beta\prod_{1\leq i\leq r}(t-a_i)^{-e_i}\]
with $e_i\in \ZZ$ non-zero, and $a_i$ distinct. We will include in $S$ any place $v$ with $v(\beta)\neq 0$, or $v(a_i-a_j)\neq 0$ for any $i\neq j$.
 In particular, for $v\not\in S$ and $x\in K$, we have $v(x-a_i)<0$ for at most one root or pole $a_i$ of $\varphi$. We then enlarge $S$  to contain all archimedean places and the set of primes defined in Lemma~\ref{lem:greens}\eqref{it:typeicontributes} as applied to the family $f$.

We now suppose that we have ordered the $a_i$ so that $e_i\geq 1$ and  $\gcd(e, e_i)=1$ for each $1\leq i\leq N_e$. In other words, the first $N_e$ of the $a_i$ are the poles in the hypothesis of the theorem.
 Given this, let
\begin{equation}\label{eq:l1}L_1(t)=\sum_{i=1}^{N_e}\sum_{\substack{v\not\in S \\ v(t-a_i)>0 \\ e\nmid v(t-a_i)}}[K_v:\QQ_v]\lambda_{[a_i], v}(t)\end{equation}
and
\begin{equation}\label{eq:l2}L_2(t)=\sum_{i=1}^{N_e}\sum_{\substack{v\not\in S \\ v(t-a_i)>0 \\ e\mid v(t-a_i)}}\frac{1}{e}[K_v:\QQ_v]\lambda_{[a_i], v}(t).\end{equation}
Let $\mathfrak{e}(v)$ and $\mathfrak{f}(v)$ denote the ramification and residual degrees of $v\in M_K^0$ in the extension $K/\QQ$, so that $[K_v:\QQ_v]=\mathfrak{f}(v)\mathfrak{e}(v)$. Then if $p_v$ is the prime of $\QQ$ above $v$,  we have
\[[K_v:\QQ_v]\lambda_{[a_i], v} = \mathfrak{f}(v)\max\{0, v(t-a)\}\log p_v = \max\{0, v(t-a)\}\deg(v).\] In particular, the summand in~\eqref{eq:l1} corresponding to the place $v$ is a positive integral multiple of $\deg(v)$, and the same is true of~\eqref{eq:l2}.

So by the $abc$ Conjecture we have for any $\epsilon>0$ 
\begin{eqnarray}\nonumber
 (N_e-2-\epsilon)[K:\mathbb{Q}]h(t) &\leq & \sum_{i=1}^{N_e}N_{a_i, S}(t)+O_{\epsilon}(1)\label{eq:line2}\\
&\leq & L_1(t)+L_2(t)+O_{\epsilon}(1)\label{eq:line3}\\
&=&\frac{1}{e}(L_1(t)+eL_2(t))+\left(1-\frac{1}{e}\right)L_1+O_{\epsilon}(1)\nonumber\\
&\leq &\frac{N_e}{e}[K:\mathbb{Q}]h(t)+\left(1-\frac{1}{e}\right)L_1(t) +O_{\epsilon}(1)\label{eq:line5}.
\end{eqnarray}
Here,   \eqref{eq:line3} follows from the fact that the summands in \eqref{eq:l1} and \eqref{eq:l2} are positive integer multiples of $\deg(v)$, and \eqref{eq:line5}  from \eqref{eq:nonneg} (and the fact that $L_1$ and $L_2$ are sums of local heights relative to $N_e$ points).
From~\eqref{eq:line5} we obtain
\[\left(\frac{N_e-2-\epsilon-N_e/e}{1-1/e}\right)[K:\mathbb{Q}]h(t)\leq L_1(t)+O_\epsilon(1).\]
Note that the coefficient of the left-hand-side can be made positive by taking $\epsilon>0$ small, given our definition of $N_e$.
In other words we obtain $\delta>0$ and $C_\delta$ such that
\begin{equation}\label{eq:abcdelta}\delta [K:\QQ] h(\phi(t))\leq \sum_{i=1}^{N_e}\sum_{\substack{v\not\in S \\ v(t-a_i)>0 \\ e\nmid v(t-a_i)}}[K_v:\QQ_v]\lambda_{[a_i], v}(t)+C_\delta.\end{equation}

Now suppose that $v(t-a_i)>0$ for some $v\not\in S$ and some $i$. By the construction of our set $S$, we have $v(t-a_j)=0$ for any root or pole $a_j$ of $\varphi$ other than $a_i$. It follows that
\[v(\phi(t))=e_iv(t-a_i)\] and,
since $\gcd(e, e_i)=1$, $v(\phi(t))$ cannot be divisible by $e$ unless $v(t-a_i)$ is. If
\[T_i=\{v\not\in S:v(t-a_i)>0\text{ for some $i$ and }e\nmid v(\varphi(t))\},\]
then \eqref{eq:abcdelta} gives 
\begin{eqnarray*}
\delta [K:\QQ] h(\varphi(t))&\leq & \sum_{i=1}^{N_e}\frac{1}{e_i}\sum_{v\in T_i}[K_v:\QQ_v]\lambda_{[\infty], v}(\varphi(t))+C_\delta\\
&\leq & \sum_{v\in T_1\cup\cdots\cup T_{N_e}}[K_v:\QQ_v]\lambda_{[\infty], v}(\varphi(t))+C_\delta,
\end{eqnarray*}
since the $T_i$ are disjoint. 

Now, since $\lambda_{[\infty], v}(x)=\log^+|x|_v$, Lemma~\ref{lem:greens}\eqref{it:typeicontributes} now gives, for any $v\in T=T_1\cup\cdots \cup T_{N_e}$,
\[\frac{1}{d}\lambda_{[\infty], v}(\varphi(t))\leq G_{f_{\varphi(t)}, v}(z),\]
and so
\[\delta h(\varphi(t))\leq \sum_{v\in T}\frac{[K_v:\QQ_v]}{[K:\QQ]}dG_{f_{\varphi(t)}, v}(z) + C_\delta\leq d\hat{h}_{f_{\varphi(t)}}(z)+C_\delta\]
for any $z\in K$, by the non-negativity of $G_{f, w}$ for all $f$ and $w$. This proves the theorem.
 \end{proof}


\section{The proofs of Theorems~\ref{th:dg} and~\ref{th:dm}}\label{sec:darmon}

Before proving Theorem~\ref{th:dg}, we translate a theorem of Darmon and Granville~\cite{darmon-granville} into a statement saying that the denominators of the values of a sufficiently complicated rational function cannot be a non-trival perfect power infinitely often, even modulo a finite set of primes.

\begin{lemma}\label{lem:dglemma}
Let $K$ be a number field, let $S\subseteq M_K$ be a finite set of places, let $e\geq 2$, and let $\varphi(t)\in K(t)$ be a rational function with at least $N_e$ distinct poles of order prime to $e$. There exists a finite $X\subseteq \mathbb{P}^1(K)$ such that for all $t\not\in X$ there exists a non-archimedean $v\not\in S$ with $v(\varphi(t))<0$ and $e\nmid v(\varphi(t))$.
\end{lemma}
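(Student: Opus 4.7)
The plan is to argue by contradiction: supposing there are infinitely many $t\in\mathbb{P}^1(K)$ for which no such $v$ exists, I will use Kummer theory and the $S$-unit theorem to place each such $t$ on one of finitely many superelliptic curves of genus at least two, and then invoke Faltings' theorem (essentially the strategy underlying Darmon--Granville). Throughout, I am free to enlarge $S$ to a larger finite set $S'$, because any $t$ failing the conclusion for $S$ also fails it for $S'$ (the condition ranges over a smaller set of places), so finiteness of the failure set for $S'$ implies finiteness for $S$.

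First, enlarge $S$ so that $\mathcal{O}_{K,S}$ has trivial ideal class group. For any $t\in K$ failing the conclusion, the fractional ideal of $\psi(t):=1/\varphi(t)$ has its positive part $e$-divisible away from $S$, so triviality of the $S$-class group gives a factorization $\psi(t)=u\cdot w^e$ with $u\in\mathcal{O}_{K,S}^*$ and $w\in K^*$. By Dirichlet's $S$-unit theorem $\mathcal{O}_{K,S}^*/(\mathcal{O}_{K,S}^*)^e$ is finite; fix representatives $c_1,\dots,c_N$ of its cosets. Each failing $t$ then yields an index $i$ and $W\in K^*$ such that $(t,W)$ is a $K$-rational point of the affine curve
\[
C_i:\; y^e=c_i^{-1}\varphi(t)^{-1}.
\]

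It remains to compute the genus of the smooth projective model of each $C_i$ via the degree-$e$ cover $(t,y)\mapsto t$. Each of the $N_e$ hypothesized poles of $\varphi$ in $\mathbb{A}^1(\overline{K})$ of order prime to $e$ is a zero of $c_i^{-1}\varphi^{-1}$ of order prime to $e$, and each such ramified fibre contributes $e-1$ to the Riemann--Hurwitz formula; hence
\[
2g(C_i)-2\geq -2e+N_e(e-1),
\]
so $g(C_i)\geq (N_e-2)(e-1)/2$, and substituting the definition of $N_e$ in each of the regimes $e=2$, $e=3$, $e\geq 4$ shows $g(C_i)\geq 2$. Irreducibility of $y^e-c_i^{-1}\varphi(t)^{-1}$ over $K(t)$ follows from Capelli's criterion: for any prime $p\mid e$, the pole orders of $\varphi$ are prime to $p$, so $c_i^{-1}\varphi^{-1}$ is not a $p$-th power in $K(t)$ (and similarly $-4c_i^{-1}\varphi^{-1}$ is not a $4$-th power when $4\mid e$). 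Faltings' theorem then gives $|C_i(K)|<\infty$, and the finite set $X$ is obtained as the union over $i$ of the $t$-projections of these $K$-points.

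The main obstacle is the Riemann--Hurwitz count: the thresholds defining $N_e$ are tight, so the argument hinges on extracting genus at least two from the hypothesized finite poles alone, without needing to control the ramification at $t=\infty$ or at zeros of $\varphi$. The reduction to finitely many superelliptic curves (enlarging $S$ to kill the class group, then invoking Dirichlet's theorem to handle $S$-units modulo $e$-th powers) is standard, but one must be careful that the Capelli check really does apply in each of the $N$ twisted curves, not merely to $y^e=\varphi(t)^{-1}$.
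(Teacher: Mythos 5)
The factorization step is where this breaks down. If $t$ fails the conclusion, what you actually know is that for every non-archimedean $v\notin S$ with $v(\psi(t))>0$ (i.e.\ $v(\varphi(t))<0$), the valuation $v(\psi(t))$ is divisible by $e$. This controls only the \emph{positive} part of the fractional ideal $\psi(t)\mathcal{O}_{K,S}$. The negative part --- coming from places $v\notin S$ at which $v(\varphi(t))>0$, that is, where $t$ is $v$-adically close to a zero of $\varphi$ --- is completely unconstrained by the hypothesis. Triviality of the $S$-class group therefore gives only $\psi(t)=u\,a^e/b$ for some unit $u$ and $a,b\in\mathcal{O}_{K,S}$, not $\psi(t)=u\,w^e$. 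Consequently, there is in general no $i$ and no $w\in K^*$ with $\psi(t)=c_iw^e$, and the failing $t$ need not lift to a $K$-rational point on any of your curves $C_i$. (Your Riemann--Hurwitz computation and the Capelli check are correct, but they certify the genus of curves that the failing values of $t$ do not, in fact, lie on.)

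The paper sidesteps this by passing to coprime homogeneous coordinates: writing $\varphi=F/G$ with $\operatorname{Res}(F,G)\in\mathcal{O}_{K,S}^*$ and $t=x/y$ with $x,y$ coprime $S$-integers, the positive part of the ideal of $1/\varphi(t)$ is precisely $G(x,y)\mathcal{O}_{K,S}$, giving the equation $G(x,y)=s_ib^e$ in coprime $(x,y)$. That is exactly the shape of the generalized Fermat equation treated by Darmon and Granville, and the paper simply cites their Theorem~$1'$, checking that the tuple of $e/\gcd(e,r)$ has negative Euler characteristic thanks to $e$ appearing at least $N_e$ times. If you want to reprove this rather than cite it, notice that you cannot reduce $G(x,y)=s_ib^e$ to $K$-points of a \emph{single} superelliptic curve $y^e=s_i^{-1}G(x,1)$: different values of the coprime pair $(x,y)$ produce different twists, and establishing that only finitely many twists carry $S$-integral points is the real content of Darmon--Granville (via an \'etale cover of the associated orbifold plus Chevalley--Weil, feeding into Faltings). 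So the fix is either to cite Darmon--Granville as the paper does, or to genuinely reproduce that machinery, not just Riemann--Hurwitz on one curve.
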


\begin{proof}
First note that proving the result for a larger set of primes suffices, and so we will assume without loss of generality that we have
\[\varphi([x:y])=F(x, y)/G(x, y)\]
with $\operatorname{Res}(F, G)\in \mathcal{O}_{K, S}^*$, and that $\mathcal{O}_{K, S}$ is a PID, by adding to $S$ all prime divisors of some set of representatives of the class group. Note that what we are trying to show is now equivalent to the claim that the set of $t\in \mathbb{P}^1(K)$ such that the fractional ideal $\varphi(t)\mathcal{O}_{K, S}$ has the form $I/J^e$, with $I, J\subseteq \mathcal{O}_{K, S}$ integral ideals, is finite.

If we have $x, y\in \mathcal{O}_K$ with $x\mathcal{O}_{K, S}+y\mathcal{O}_{K, S}=\mathcal{O}_{K, S}$ and
\[\frac{F(x, y)}{G(x, y)}\mathcal{O}_{K, S}=\frac{I}{J^e},\]
we in particular have $G(x, y)\mathcal{O}_{K, S}=J^e$. Since $\mathcal{O}_{K, S}$ is a PID, we indeed have $G(x, y)=s_i b^e$ for some $b\in \mathcal{O}_{K, S}$, and $s_i$ a representative of one of the finitely many classes in $\mathcal{O}_{K, S}^\times/(\mathcal{O}_{K, S}^\times)^e$. Clearing denominators of $s_i$ and the coefficients of $G$, we now have
\begin{equation}\label{eq:dg}G_i(x, y)=z^e,\end{equation}
with $z\in \mathcal{O}_K$,
for one of finitely many forms $G_i\in \mathcal{O}_K[x, y]$.

Now, for any $t\in K$, we may write $t=x/y$ with $x, y\in \mathcal{O}_K$ and $x\mathcal{O}_K+y\mathcal{O}_K$ dividing some fixed ideal $I\subseteq\mathcal{O}_K$ with $I\mathcal{O}_{K, S}$ trivial. If infinitely many of these admit a $z$ solving one of the equations \eqref{eq:dg}, then there is one such equation admitting infinitely many solutions, say $G=G_i$. 

Let $S'$ be unordered tuple of values of the form $e/gcd(e, r)$ as $r$ ranges over the orders of the distinct roots of $G(x, 1)$. 
By a theorem of Darmon and Granville~\cite[Theorem~1']{darmon-granville}, if there are infinitely many solutions to~\eqref{eq:dg} satisfying the conditions above then either $S'$ contains one or two entries, or  $S'$ is one of  $\{2, 2, 2, 2\}$, $\{3, 3, 3\}$, $\{2, 4, 4\}$, $\{2, 2, n\}$ for some integer $n$, or $\{2, 3, n\}$ for some integer $3\leq n\leq 6$. Our hypotheses ensure that $e$ occurs in the list at least $N_e$ times, which rules out any of these possibilities.
\end{proof}

\begin{proof}[Proof of Theorem~\ref{th:dg}]
The result follows immediately from Lemma~\ref{lem:greens}\eqref{it:filled} and Lemma~\ref{lem:dglemma}. In particular, by Lemma~\ref{lem:greens} there is a finite set $S$ of places such that if $\alpha\in K$ is a periodic point for $f_{\varphi(t)}$, then $v(\varphi(t))=mv(\alpha)$ for every $v\not\in S$ with $v(\varphi(t))<0$. Lemma~\ref{lem:dglemma} now restricts $t$ to one of finitely many elements of $\mathbb{P}^1(K)$.
\end{proof}

\begin{remark}
Note that our application of the result of Darmon and Granville shows that the results stated here could be strengthened slightly, at the cost of a more complicated formulation. For instance, when $e=4$ we derive a conclusion when $\varphi$ has at least 3 poles of odd order. The same proof gives the same conclusion if $\varphi$ has at least 5 poles or order not divisible by 4 (but not necessarily any of odd order).
\end{remark}

\begin{proof}[Proof of Theorem~\ref{th:dm}]
Let $t=x/y$ with $\gcd(x, y)=1$ and $xy\neq 0$ (the case $xy=0$ corresponds to $f(z)=z^d$). If
\[f(z)=z^d+\frac{x^m}{x^m+y^m}\] has a $\QQ$-rational periodic point other than $z=\infty$ then, by the argument that proves Lemma~\ref{lem:greens}, we have
\[x^m+y^m=w^d\]
for some $w\in \mathbb{Z}$. Specifically, note first that $x^m$ and $x^m+y^m$ are coprime. Now, if $p$ is a prime and $v_p(x^m+y^m)>0$ is not divisible by $d$, then for all $z\in \QQ$ we have $|z|_p^d\neq |t|_p$. Either $|z|_p^d>|t|_p$, in which case an induction shows that $|f^n(z)|_p\to\infty$ or else $|z|^d_p<|t|_p$ in which case $|f(z)|_p^d=|t|_p^d>|t|_p$, in which case $|f^n(z)|_p\to\infty$ again. In other words, if $f_t$ does have a rational preperiodic point (other than $\infty$) and $p\mid (x^m+y^m)$, then $d\mid v_p(x^m+y^m)$. In other words, $x^m+y^m=\pm w^d$ for some $w\in \ZZ$. A negative sign may be asborbed in $w$ if $d$ is odd, into $x$ and $y$ if $m$ is odd, and is impossible of $m$ and $d$ are both even, so without loss of generality we have a solution to $x^m+y^m= w^d$ in integers with $\gcd(x, y)=1$. By the Main Theorem of~\cite{dm}, there is no such solution with $xy\neq 0$.
\end{proof}


\end{document}